\newcommand{\R}{\mathbb{R}}
\newcommand{\N}{\mathbb{N}}
\newcommand{\C}{\mathbb{C}}
\newcommand{\eps}{\varepsilon}
\newcommand{\fhi}{\varphi}
\newcommand{\weakto}{\rightharpoonup}
\newcommand{\ra}{\rangle}
\newcommand{\la}{\langle}
\newcommand{\del}{\partial}
\newcommand{\id}{\mathrm{id}}
\newcommand{\per}{\mathrm{per}}
\def\ri{{\rm i}}%
\renewcommand{\Re}{\mathrm{Re}}
\def\XXint#1#2#3{{\setbox0=\hbox{$#1{#2#3}{\int}$}
     \vcenter{\hbox{$#2#3$}}\kern-.5\wd0}}
\newtheorem{theorem}{Theorem}[section]
\newtheorem{lemma}[theorem]{Lemma}
\newtheorem{proposition}[theorem]{Proposition}
\newtheorem{assumption}[theorem]{Assumption}
\newtheorem{remark}[theorem]{Remark}
\newtheorem{algorithm}{Algorithm}
\numberwithin{equation}{section}
\title{Dispersive homogenized models and coefficient formulas for
  waves in general periodic media}
\author{T.\,Dohnal, A.\,Lamacz, B.\,Schweizer\thanks{Technische
    Universit\"at Dortmund, Fakult\"at f\"ur Mathematik, Vogelpothsweg
    87, D-44227 Dortmund, Germany.}}
\begin{document}

\maketitle

\begin{abstract}
  We analyze a homogenization limit for the linear wave equation of
  second order.  The spatial operator is assumed to be of divergence
  form with an oscillatory coefficient matrix $a^\eps$ that is
  periodic with characteristic length scale $\eps$; no spatial
  symmetry properties are imposed. Classical homogenization theory
  allows to describe solutions $u^\eps$ well by a non-dispersive wave
  equation on fixed time intervals $(0,T)$. Instead, when larger time
  intervals are considered, dispersive effects are observed. In this
  contribution we present a well-posed weakly dispersive equation with
  homogeneous coefficients such that its solutions $w^\eps$ describe
  $u^\eps$ well on time intervals $(0,T\eps^{-2})$. More precisely, we
  provide a norm and uniform error estimates of the form $\| u^\eps(t)
  - w^\eps(t) \| \le C\eps$ for $t\in (0,T\eps^{-2})$. They are
  accompanied by computable formulas for all coefficients in the
  effective models. We additionally provide an $\eps$-independent
  equation of third order that describes dispersion along rays and we
  present numerical examples.
\end{abstract}

\medskip {\bf Keywords:} wave equation, large time homogenization,
dispersive model, Bloch analysis

  \medskip
  {\bf MSC:} 35B27, 35L05

\pagestyle{myheadings} 
\thispagestyle{plain} 

\markboth{T.\,Dohnal, A.\,Lamacz, B.\,Schweizer}{Dispersive
  homogenized models for waves in general periodic media}

\section{Introduction}

Waves in heterogeneous media exhibit dispersion. This fact is
well-known in physics and it can be observed also for waves that are
described (microscopically) by the classical, non-dispersive wave
equation. Our aim in this contribution is to cast the effect in
mathematical terms, to present a well-posed, dispersive effective wave
equation, and to provide computable formulas for the (homogeneous)
coefficients in the effective equation.

Our analysis concerns solutions $u^\eps: \R^n\times (0,\infty)\to \R$,
$n\in \{1,2,3\}$, of the linear wave equation in periodic media,
\begin{equation}
  \label{eq:eps-wave}
  \del_t^2 u^\eps(x,t) = \nabla\cdot (a^\eps(x) \nabla u^\eps(x,t))\,.
\end{equation}
The medium is characterized by a positive, symmetric coefficient
matrix field $a^\eps :\R^n \to \R^{n\times n}$.  We are interested in
periodic media with a small periodicity length-scale $\eps>0$, and
assume that $a^\eps(x) = a_Y(x/\eps)$ where $a_Y:\R^n\to \R^{n\times
  n}$ is periodic with the periodicity of the unit cell $Y =
(-\pi,\pi)^n$. Except for positivity, matrix symmetry, and
periodicity, no assumptions on $a_Y(.)$ are made (in contrast to our
earlier paper \cite{Bloch-DLS-2013}, where certain spatial symmetries
are exploited). Our interest is to describe the solutions $u^\eps$ for
large times, $t\sim \eps^{-2}$. For classical homogenization results
(derivation of effective equations on fixed time intervals) we refer
to \cite {FrancfortMR1172450, FrancfortMurat1992} and mention here
that, due to energy conservation, even classical homogenization
results for the wave equation are much more involved than
corresponding results e.g.~for the heat equation (the ``intermediate
case'', the wave equation with damping is considered in \cite
{OriveZuazua}). To simplify the exposition, we work here with smooth
coefficients $a^\eps$, noting that the regularity of the coefficient
is crucial in observability results, see \cite{MR1921162}.

In order to have a well-defined object $u^\eps$, we must complement
the wave equation with an initial condition.  For notational
convenience, we restrict our analysis to a vanishing initial velocity,
i.e. to initial data
\begin{equation}
  \label{eq:initial}
  u^\eps(x,0) = f(x), \quad \del_t u^\eps(x,0) = 0\,.
\end{equation}
In our mathematical results, we will assume smoothness of $f$. More
precisely, we assume that $f\in L^2(\R^n)\cap L^1(\R^n)$ has the
Fourier representation
\begin{equation}
  \label{eq:f-cond}
  f(x) = \frac1{(2\pi)^{n/2}} \int_{\R^n} F_0(k)\, e^{+\ri k\cdot x}\, dk\,,
\end{equation}
where $F_0:\R^n\to \C$ has compact support $K\subset \R^n$.

We note that our assumptions imply the smoothness $f\in
C^\infty(\R^n)$. Less regular initial data can be treated with the
help of our results, exploiting the linearity of the equations:
Decomposing initial data with bounded energy into two parts, our
results can be applied to the smooth part, while the other part
generates an error that is, for all times, small in energy norm.

\subsection*{Known results on dispersive models}

The contribution \cite {ChenFishMR1896976} started a series of
articles \cite {ChenFishMR2097759, ChenFish-Uniformly,
  ChenFishMR1896977, ChenFishMR1896976} which is concerned with the
derivation of dispersive models for the wave equation. The authors
perform asymptotic (two-scale) expansions of $u^\eps$ in $\eps$ and
obtain with their formal calculations a fourth order equation of the
form
\begin{equation}
  \label{eq:formal-weakly-disp}
  \del_t^2 U^\eps = A D^2 U^\eps - \eps^2 C D^4 U^\eps \,,
\end{equation}
where $A$ and $C$ are homogeneous coefficients and $D$ denotes spatial
derivatives. They call this equation ``bad Boussinesq equation'', a
well-chosen name, considering the fact that the equation is ill-posed
(in the homogenization process, a positive matrix $A$ and a
non-positive tensor $C$ appear). We note that in the earlier article
\cite {SanSym} this equation also appears (with a sign typo) as a
result of a Bloch analysis, but it is not further analyzed in \cite
{SanSym}.  In \cite{ChenFishMR2097759, ChenFish-Uniformly,
  ChenFishMR1896977, ChenFishMR1896976} various approaches for a
further (analytical and numerical) exploitation of equation \eqref
{eq:formal-weakly-disp} are investigated: regularizations, non-local
approximations, and multiple time scales.

The first rigorous result that establishes a dispersive model for the
wave equation \eqref {eq:eps-wave} appeared in \cite {Lamacz-Disp}. In
that work, which is concerned with the one-dimensional case, the
well-posed dispersive equation \eqref {eq:weakly-dispersive} below is
formulated and an error estimate similar to \eqref {eq:approx-result}
is derived. The method of proof is very different from our approach
here (which is as in \cite{Bloch-DLS-2013}): Adaption operators are
constructed and used to adapt smooth solutions of the homogeneous
dispersive system to the periodic medium. After the adaption, direct
energy procedures can be applied.

Another mathematical derivation of dispersive limits is performed in
\cite {Allaire-PR-Diffractive2009, Allaire-PR-Diffractive2011}.  The
wave equation is scaled as in our setting (time scales of order
$\eps^{-2}$ are investigated), but the initial data are assumed to be
oscillatory at scale $\eps$ and are described by Bloch wave packets.
In this setting, the effective diffraction can be described by a
Schr\"odinger equation for the envelope function. Another scaling of
the system is analyzed in \cite {Allaire-Dispersive2003}, where large
potentials instead of large time spans are considered.

\subsubsection*{Bloch analysis} The central tool in a Bloch analysis
is the Bloch expansion of an arbitrary function (in our case the
solution $u^\eps$). While in a Fourier expansion one uses the dual
variable $k\in \R^n$, the Bloch expansion uses two dual variables, $k$
and $m$. Since $m\in \N_0$ is an additional parameter, the other
parameter varies only in a restricted domain, the Brillouin zone,
$k\in Z := (-1/2, 1/2)^n$.  The basis functions $e^{\ri k\cdot x}$ of
the Fourier analysis are replaced by solutions $\psi_m(.,k)$ of the
Bloch eigenvalue problem
\begin{equation}
  \label{eq:eigenvalue}
  - (\nabla_y + \ri k ) \cdot (a_Y(y) (\nabla_y + \ri k ) \psi_m(y,k) ) 
  = \mu_m(k) \psi_m(y,k)\,.
\end{equation}
Here $\psi_m(.,k) : Y \to \C$ is a periodic function, $\psi_m(.,k) \in
H^1_\per(Y)$, $0 \le \mu_0(k) \le \mu_1(k) \le \hdots$ are the
ordered, real eigenvalues.

Bloch wave homogenization theory establishes that the effective
behavior of $u^\eps$ in the limit $\eps\to 0$ is characterized solely
by the behavior of the smallest eigenvalue $\mu_0(k)$ in a
neighborhood of $k = 0\in Z$. For such results in classical
homogenization settings, we refer to \cite {Allaire-Bloch1998,
  MR1897707, MR2124894, MR2219790, MR1484944}.  In
Fig.~\ref{F:Bloch_wvs} we plot the Bloch wave functions $\psi_0(y,k)$
for $k=(1/2,0)$ and $k=(1/2,1/2)$. For an illustration of the
eigenvalue structure see Fig.~\ref{F:eps_conv} (a).
\begin{figure}[h!]%
  \begin{center}
    \includegraphics[width=7.5cm]{./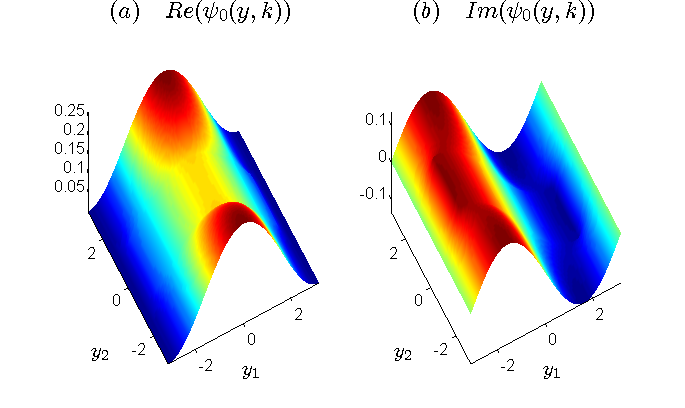}%
    \includegraphics[width=7.5cm]{./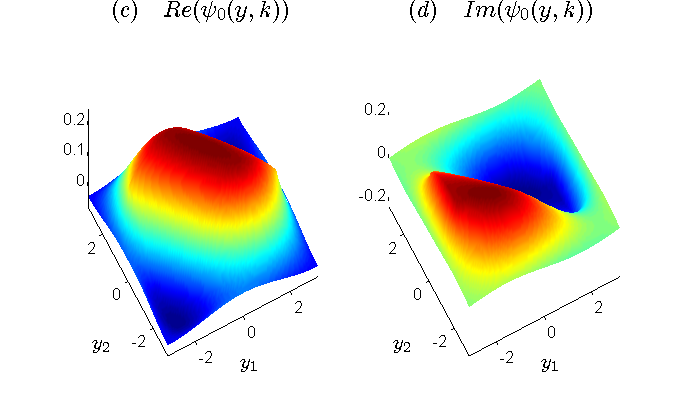}
    \caption{The Bloch wave $\psi_0(y,k)$ at $k=(1/2,0)$ in (a) and
      (b) and at $k=(1/2,1/2)$ in (c) and (d) corresponding to $\mu_0$
      for $a_Y$ from \eqref{E:a_rect}.}%
  \end{center}
  \label{F:Bloch_wvs}%
\end{figure}

The Bloch wave homogenization method was used for an analysis of
higher order effects of the heterogeneity of the medium in the
influential article \cite {SanSym}. That article does not formulate a
well-posed dispersive effective equation (hence, in particular, it
does not provide an error estimate), but it gives a lot of insight
into the dispersive limit: Even the effective {\em long time} behavior
of $u^\eps$ is characterized by $\mu_0(k)$ and its behavior near
$k=0$.  Expanding $\mu_0(k)$ in a Taylor series around $k = 0$, we may
write
\begin{equation}
  \label{eq:Taylor-mu}
  \mu_0(k)  =  \sum A_{lm} k_l k_m + \sum C_{lmnq} k_l k_m k_n k_q + O(|k|^6)\,,
\end{equation}
where odd derivatives vanish due to the symmetry $\mu_0(k)=\mu_0(-k)$,
sums are over repeated indices.  The matrix $A$ and the tensor $C$
provide the coefficients in the formal equation \eqref
{eq:formal-weakly-disp}, where $CD^4$ is the spatial fourth order
operator
\begin{equation}
  \label{eq:operator-C}
  C D^4 = \sum C_{ijkl} \del_i \del_j \del_k \del_l\,.
\end{equation}
While $A$ is positive definite and symmetric, $C$ turns out to be
negative semi-definite, a fact that is shown in \cite {MR2219790}. As
a consequence, the differential operator $AD^2$ is negative and the
operator $-\eps^2 CD^4$ is non-negative. For this reason, equation
\eqref {eq:formal-weakly-disp} is ill-posed.

Let us be more precise about the arguments in the Bloch analysis: We
start with the Bloch expansion of the solution $u^\eps$.  Using the
coefficients $\hat f_m^\eps(k)$ of a Bloch expansion of the initial
values $f$ and the Bloch eigenfunctions $\tilde\psi_m$ that have
$L^2(Y)$-norm $1$ we may write
\begin{equation}
  \label{eq:Bloch-u-eps}
  u^\eps(x,t) = \sum_{m=0}^\infty \int_{Z/\eps} \hat f_m^\eps(k) \tilde\psi_m(x/\eps,\eps k)
  e^{\ri k\cdot x}\, \Re\left(e^{\ri t \sqrt{\mu_m(\eps k)} / \eps}\right)\, dk\,.
\end{equation}
For a justification, see Lemma 2.1 of \cite{Bloch-DLS-2013}. In the
next steps, this formula is simplified for small $\eps>0$: One
realizes, to leading order in $\eps$, that only $m=0$ has to be
considered, that $\hat f_m^\eps$ can be replaced by the Fourier
transform $F_0$ of the initial values, and that $\tilde\psi_0$ can be
replaced by the constant $(2\pi)^{-n/2}$. Expanding finally
$\mu_0(\eps k)$ in $\eps$, one finds the following expression, which
can be used to define an approximate solution $v^\eps$.
\begin{equation}
  \label{eq:v-eps}
  \begin{split}
    v^\eps(x,t) :=  (2\pi)^{-n/2} \frac12 \sum_{\pm} \int_{K} F_0(k) 
    e^{\ri k\cdot x}
    \exp\left( \pm \ri t \sqrt{\sum A_{lm} k_l k_m}  \right)\qquad\qquad\\
    \times\   \exp\left( \pm \frac{\ri \eps^2}{2} t
      \frac{\sum C_{lmnq} k_l k_m k_n k_q}{\sqrt{\sum A_{lm} k_l k_m}}\right)\, dk\,.
  \end{split}
\end{equation}
The equation \eqref {eq:formal-weakly-disp} with tensors $A$ and $C$
is constructed in such a way that (formally) the function $v^\eps$ is
a solution up to errors of order $\eps^4$.

\subsubsection*{Rigorous approximation results} 
A rigorous mathematical analysis can be performed when the equation
\eqref {eq:formal-weakly-disp} is transformed into a well-posed
equation, using the replacement $A D^2 U^\eps\approx \del_t^2 U^\eps$
to re-write the operator $CD^4$. The first utilization of this trick
for a rigorous result seems to be in the treatment \cite {Lamacz-Disp}
in the one-dimensional case. More recently, we were able to exploit
the same trick in arbitrary dimension in \cite{Bloch-DLS-2013} (under
quite strong spatial symmetry assumptions on the coefficient field
$a_Y(.)$).

In that contribution, we use a rigorous Bloch wave analysis to show
that $v^\eps$ of \eqref {eq:v-eps} approximates $u^\eps$ in
appropriate norms.  In a second step, we show with energy methods that
$v^\eps$ is close to the solution $w^\eps$ of the well-posed, weakly
dispersive equation
\begin{equation}
  \label{eq:weakly-dispersive}
  \del_t^2 w^\eps = A D^2 w^\eps + \eps^2 E D^2 \del_t^2 w^\eps 
  - \eps^2 F D^4 w^\eps\,.
\end{equation}
The positive semi-definite and symmetric tensors $E$ and $F$ are
constructed in such a way that
\begin{equation}
  \label {eq:def-coeff-EF}
  -C D^4 = E D^2 A D^2- F D^4\,.
\end{equation}
Together, the two estimates provide an estimate for $u^\eps - w^\eps$.
This shows that the weakly dispersive equation \eqref
{eq:weakly-dispersive} is a valid replacement for the original
equation \eqref {eq:eps-wave} on large time intervals.

\subsection*{New results}
In the article at hand, we obtain the long-time homogenization result
for very general coefficient fields $a_Y(.)$. In particular, we show
that the well-posed equation \eqref {eq:weakly-dispersive} provides
the effective description of solutions for large times, characterizing
dispersion in arbitrary dimension and without spatial symmetry
assumptions. Furthermore, we provide explicit formulas for the
effective coefficients.

\paragraph{Decomposition lemma and approximation result.}
In order to show the approximation result, we can rely on the Bloch
wave analysis of \cite{Bloch-DLS-2013}. The only new ingredient is a
considerably developed decomposition lemma: Lemma \ref {lem:decompose}
below yields that, without any structural assumptions on $C$, the
differential operator $C D^4$ can be written as in \eqref
{eq:def-coeff-EF} for appropriate semi-definite and symmetric tensors
$E$ and $F$ (using the given positive, symmetric matrix $A$). In
particular, the lemma allows to decompose the operator $C D^4$ also
when the coefficients $a_Y(.)$ have no spatial symmetries.

Once the decomposition lemma is established, we can apply results of
\cite {Bloch-DLS-2013}. We obtain that \eqref {eq:weakly-dispersive}
is well-posed and that solutions $w^\eps$ approximate the solutions
$u^\eps$. More precisely, from the analysis of \cite {Bloch-DLS-2013},
we obtain an error estimate of the form $\| u^\eps(t) - w^\eps(t) \|
\le C_0\eps$ uniformly in $t\in (0,T\eps^{-2})$ in general periodic
media. This approximation result is stated and proved in Section \ref
{sec.mainproof}.

\paragraph{An $\eps$-independent third order dispersive equation.}
Our aim in Section \ref {sec.KdVrays} is to provide a simplified model
in which no $\eps$-dependence occurs.  The approximation result of
Theorem \ref {thm:main} below allows to analyze, instead of the
solution $u^\eps$ of the original problem, the solution $w^\eps$ of
the weakly dispersive equation \eqref {eq:weakly-dispersive}. In
Section \ref {sec.KdVrays} we analyze $w^\eps$ in two dimensions in
polar coordinates.  On every ray through the origin, for an
appropriate scaling of the solution, we can perform the limit $\eps\to
0$. The result is an equation that determines the shape of pulses,
namely a linear third order equation (a linearized KdV-equation). All
coefficients in this equation are computable from the coefficient
field $a_Y$.

\paragraph{Algorithms to compute effective quantities.}
In Section \ref {sec.Calculation} we present an algorithm that
provides the homogeneous coefficients in all effective equations,
i.e. $A$ and $C$ (in a more direct form than as derivatives of the
Bloch eigenvalue), the coefficients $E$ and $F$, and the coefficients
of the linearized KdV-equation.

The numerical results of Section \ref {sec.Calculation} compare
solutions to the original problem with solutions to the effective
problems. We find a remarkable qualitative and quantitative agreement
also for moderate $\eps$, and the correct experimental convergence
rates for $\eps\to 0$.

\section{A weakly dispersive effective equation}
\label{sec.mainproof}

In Section \ref {ssec.mainresult} we formulate our main result. The
main part of its proof can be obtained by applying results of
\cite{Bloch-DLS-2013}; this is the subject of Subsection \ref
{ssec.errorestimate}. The new ingredient is the decomposition lemma,
which is shown in Subsection \ref {ssec.decomposition}.

\subsection{Main approximation result}
\label{ssec.mainresult}

We emphasize that the set $Y\subset \R^n$, the reciprocal cell $Z :=
(-1/2, 1/2)^n\subset \R^n$, and the support $K\subset \R^n$ are fixed
data of the problem.  Given is also the coefficient field $a_Y$ that
determines $a^\eps(x) = a_Y(x/\eps)$.

\begin{assumption}\label{ass:a-f}
  The coefficient field $a_Y:\R^n\to \R^{n\times n}$ is $Y$-periodic
  for the cube $Y := (-\pi, \pi)^n \subset \R^n$ and has the
  regularity $a_Y\in C^1(\R^n,\R^{n\times n})$. The matrix $a_Y(y)$ is
  symmetric for every point $y\in \R^n$, i.e. $(a_Y(y))_{ij} =
  (a_Y(y))_{ji}$ for all $i,j\in\{1,...,n\}$. The field is positive
  definite: for some $\gamma>0$ there holds $\sum_{i,j=1}^n
  (a_Y(y))_{ij}\xi_i\xi_j\geq \gamma |\xi|^2$ for every $y\in\R^n$ and
  every $\xi\in\R^n$.
\end{assumption}

Our main approximation result is stated in the following theorem. The
result is very similar to the main theorem in
\cite{Bloch-DLS-2013}. The difference is that we do not assume any
spatial symmetry of $a_Y(.)$ (such as a reflection symmetry in each
coordinate direction or symmetry with respect to exchanging coordinate
axes). We note that dimensions $n>3$ can be treated by assuming higher
regularity properties.

\begin{theorem}[Approximation]\label{thm:main}
  Let $\eps = \eps_l \to 0$ be a sequence of positive numbers and $n
  \in \{1, 2, 3\}$ be the dimension. Let the medium $a_Y:\R^n\to
  \R^{n\times n}$ satisfy Assumption \ref {ass:a-f} and let the
  initial data $f: \R^n\to \R$ be as in \eqref {eq:f-cond}.  We use
  the coefficient matrices $A$ and $C$ defined in \eqref
  {eq:Taylor-mu}. Let $E$ and $F$ be positive semi-definite such that
  \eqref {eq:def-coeff-EF} holds (the existence is established in
  Lemma \ref {lem:decompose} below).  Then the following holds:
  \begin{enumerate}
  \item {\bf Well-posedness} Equation \eqref {eq:weakly-dispersive}
    with initial condition \eqref {eq:initial} has a unique solution
    $w^\eps$ for all positive times (see Theorem \ref
    {thm:main-approximation} below for function spaces).
  \item {\bf Error estimate} Let $w^\eps$ be the solution of \eqref
    {eq:weakly-dispersive}, and let $u^\eps$ be the solution of \eqref
    {eq:eps-wave} for the same initial condition \eqref
    {eq:initial}. Then, with a constant $C_0 = C_0(a_Y, T_0, f)$,
    there holds the error estimate
    \begin{equation}
      \label{eq:approx-result}
      \sup_{t\in [0,T_0 \eps^{-2}]} 
      \| u^\eps(.,t) - w^\eps(.,t) \|_{L^2(\R^n) + L^\infty(\R^n)} 
      \le C_0 \eps\,.
    \end{equation}
  \end{enumerate}
\end{theorem}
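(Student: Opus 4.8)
The plan is to reduce Theorem \ref{thm:main} to two largely independent pieces that are already available in the literature, with the decomposition lemma (Lemma \ref{lem:decompose}) as the only genuinely new input. For the \textbf{Well-posedness} part, I would first invoke the decomposition \eqref{eq:def-coeff-EF} to rewrite \eqref{eq:weakly-dispersive} in the form $\del_t^2 w^\eps = A D^2 w^\eps + \eps^2 E D^2 \del_t^2 w^\eps - \eps^2 F D^4 w^\eps$ with $E,F$ positive semi-definite and symmetric. The key observation is that the operator $(\id - \eps^2 E D^2)$ acting on $\del_t^2 w^\eps$ is invertible on the relevant function spaces (its Fourier symbol is $1 + \eps^2 \sum E_{lm} k_l k_m \ge 1$), so the equation can be recast as a second-order evolution $\del_t^2 w^\eps = \mathcal{L}^\eps w^\eps$ with $\mathcal{L}^\eps = (\id - \eps^2 E D^2)^{-1}(A D^2 - \eps^2 F D^4)$. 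Passing to Fourier space, the symbol of $-\mathcal{L}^\eps$ is
\begin{equation}
  \label{eq:symbol-positivity}
  \frac{\sum A_{lm} k_l k_m + \eps^2 \sum F_{lmnq} k_l k_m k_n k_q}{1 + \eps^2 \sum E_{lm} k_l k_m}\,,
\end{equation}
which is real and non-negative because $A$ is positive definite, $F$ is positive semi-definite, and the denominator is positive. This yields a conserved energy and standard existence/uniqueness by the Fourier-analytic (or semigroup) method; I would cite Theorem \ref{thm:main-approximation} for the precise functional setting rather than reprove it.

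For the \textbf{Error estimate} I would follow the two-step strategy announced in the introduction, treating $v^\eps$ of \eqref{eq:v-eps} as an intermediary. The first step is the Bloch-wave approximation $\|u^\eps(\cdot,t) - v^\eps(\cdot,t)\|_{L^2 + L^\infty} \le C \eps$ uniformly on $[0,T_0\eps^{-2}]$; this is exactly the content of the rigorous Bloch analysis of \cite{Bloch-DLS-2013}, and since that analysis only uses positivity, symmetry and periodicity of $a_Y$ (together with the Taylor expansion \eqref{eq:Taylor-mu} of $\mu_0$), it applies verbatim in the present setting without spatial symmetry assumptions. The second step is the energy estimate $\|v^\eps(\cdot,t) - w^\eps(\cdot,t)\| \le C\eps$, again on the long time interval. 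Here one uses that $v^\eps$ solves \eqref{eq:weakly-dispersive} up to a residual of order $\eps^4$ (by construction of $v^\eps$ from the dispersion relation) and that the energy associated with \eqref{eq:symbol-positivity} controls the difference; the factor $\eps^2$ lost on the time interval $(0,T_0\eps^{-2})$ is exactly compensated by the $\eps^4$ residual via a Gronwall-type argument.

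The point I want to emphasize is that \emph{nothing} in either step requires the symmetry hypotheses of \cite{Bloch-DLS-2013}: the only place where symmetry was previously used is in guaranteeing that the decomposition \eqref{eq:def-coeff-EF} exists with semi-definite $E$ and $F$. That gap is precisely what Lemma \ref{lem:decompose} fills. Thus the proof is assembled by (i) applying Lemma \ref{lem:decompose} to obtain admissible $E,F$, (ii) quoting well-posedness from the symbol positivity \eqref{eq:symbol-positivity}, and (iii) chaining the two approximation estimates $u^\eps \approx v^\eps \approx w^\eps$ from \cite{Bloch-DLS-2013} via the triangle inequality in the $L^2(\R^n) + L^\infty(\R^n)$ norm.

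The main obstacle, and the reason this theorem is not merely a corollary of \cite{Bloch-DLS-2013}, lies entirely in the decomposition: for a general negative semi-definite tensor $C$ one must produce positive semi-definite symmetric $E$ and $F$ satisfying the operator identity $-CD^4 = ED^2\,AD^2 - FD^4$. In Fourier symbols this reads $-\sum C_{lmnq} k_l k_m k_n k_q = \bigl(\sum E_{lm} k_l k_m\bigr)\bigl(\sum A_{pq} k_p k_q\bigr) - \sum F_{lmnq} k_l k_m k_n k_q$, i.e.\ one must split the (non-negative) quartic form $-C(k,k,k,k)$ as the product $E(k,k)\,A(k,k)$ minus a still non-negative remainder $F(k,k,k,k)$. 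The delicate issue is the simultaneous sign control of $E$ and $F$ as quadratic/quartic forms — without symmetry there is no canonical diagonalization to exploit — so I would expect the hard work to be hidden in Lemma \ref{lem:decompose}, which I am permitted to assume here.
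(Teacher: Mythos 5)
Your proposal follows essentially the same route as the paper: Lemma \ref{lem:decompose} supplies admissible $E,F$, the Bloch-wave estimate $\|u^\eps - v^\eps\|_{L^2+L^\infty}\le C_0\eps$ and the well-posedness/energy result for \eqref{eq:weakly-dispersive} are quoted from \cite{Bloch-DLS-2013}, and the triangle inequality closes the argument. The one step you elide is that the quoted energy result (Theorem 3.3 of \cite{Bloch-DLS-2013}) controls only $\del_t(v^\eps-w^\eps)$ and $\nabla(v^\eps-w^\eps)$ in $L^2$, at order $\eps^2$; passing from these derivative bounds to the bound $\|v^\eps-w^\eps\|_{L^2+L^\infty}\le C_0\eps$ on the difference itself is not a direct Gronwall consequence (naive time integration over $(0,T_0\eps^{-2})$ would destroy the gain) but requires the interpolation lemma (Lemma 3.4 of \cite{Bloch-DLS-2013}), which trades one power of $\eps$ for the missing antiderivative — you should cite that step explicitly.
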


Here we use, for two Banach spaces with norms $\| . \|_X$ and $\|
. \|_Y$, the weaker norm $\| u \|_{X+Y} := \inf\{\|u_1\|_X + \|u_2\|_Y
: u = u_1 + u_2\}$. Such a norm appears in our main theorem \ref
{thm:main}, since different contributions to the error $u^\eps -
w^\eps$ are measured in different norms.

\subsection{Proof of Theorem \ref {thm:main}}
\label{ssec.errorestimate}

The following corollary is the central result of the Bloch analysis.
It is derived with mathematical rigor in \cite{Bloch-DLS-2013}; it
provides a comparison between the solution $u^\eps$ of the
heterogeneous wave equation with the explicitely defined function
$v^\eps$.

\begin{theorem}[Corollary 2.5 of
  \cite{Bloch-DLS-2013}] \label{thm:simplified-u} Let Assumption \ref
  {ass:a-f} be satisfied. Let $u^\eps$ be the solution of \eqref
  {eq:eps-wave} and let $v^\eps$ be defined by \eqref {eq:v-eps}. Then
  \begin{equation}
    \label{eq:approx-result-SanSym}
    \sup_{t\in [0,T_0 \eps^{-2}]} 
    \| u^\eps(.,t) - v^\eps(.,t) \|_{L^2(\R^n) + L^\infty(\R^n)} \le C_0 \eps.
  \end{equation}
\end{theorem}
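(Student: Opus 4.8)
The statement is imported verbatim from \cite{Bloch-DLS-2013} as Corollary~2.5, so within the present paper it is established by citation; nevertheless the underlying argument is a Bloch-expansion comparison, and the plan is to carry it out as follows. The starting point is the exact Bloch representation \eqref{eq:Bloch-u-eps} of $u^\eps$ together with the explicit definition \eqref{eq:v-eps} of $v^\eps$. The two expressions differ by exactly four simplifications: (i) truncation of the band sum to $m=0$; (ii) replacement of the Bloch coefficients $\hat f_m^\eps(k)$ by the Fourier transform $F_0(k)$; (iii) replacement of the eigenfunction $\tilde\psi_0(x/\eps,\eps k)$ by the constant $(2\pi)^{-n/2}$; and (iv) the Taylor approximation of the dispersion relation $\sqrt{\mu_0(\eps k)}/\eps$. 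I would estimate the error committed by each replacement separately and add the contributions, measuring each in whichever of the norms $L^2(\R^n)$ or $L^\infty(\R^n)$ is natural for it --- which is precisely why the sum norm $L^2+L^\infty$ appears in \eqref{eq:approx-result-SanSym}.

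For the first three replacements I would use the regularity of the band structure together with the smoothness and compact support of $F_0$. Since $F_0$ is supported in the fixed compact set $K$, the integrals in \eqref{eq:Bloch-u-eps} see only frequencies $\eps k$ in an $O(\eps)$-neighborhood of $k=0$; there the spectral gap $\mu_1(0)>\mu_0(0)=0$ forces the higher-band projections of the smooth datum $f$ to be $O(\eps)$ small, controlling (i). The coefficient mismatch in (ii) and the eigenfunction mismatch in (iii) are likewise $O(\eps)$: the analyticity of $k\mapsto\mu_0(k)$ and $k\mapsto\psi_0(\cdot,k)$ near $k=0$, with $\psi_0(\cdot,0)\equiv\mathrm{const}$, gives a first-order expansion $\tilde\psi_0(y,\eps k)=(2\pi)^{-n/2}+O(\eps k)$ and an analogous expansion for $\hat f_0^\eps$. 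These three estimates are measured partly in $L^2$ (via Plancherel on the band expansion) and partly in $L^\infty$ (via $\|F_0\|_{L^1}$), and each contributes at most $C_0\eps$ uniformly in $t$.

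The decisive step is the phase approximation (iv), and it is here that the long time interval $[0,T_0\eps^{-2}]$ enters. Inserting the Taylor expansion \eqref{eq:Taylor-mu} and writing $a(k)=\sum A_{lm}k_lk_m$ and $c(k)=\sum C_{lmnq}k_lk_mk_nk_q$, one has $\mu_0(\eps k)=\eps^2 a(k)+\eps^4 c(k)+O(\eps^6|k|^6)$, whence
\begin{equation*}
  \frac{\sqrt{\mu_0(\eps k)}}{\eps}=\sqrt{a(k)}+\frac{\eps^2}{2}\,\frac{c(k)}{\sqrt{a(k)}}+O(\eps^4)
\end{equation*}
uniformly for $k\in K$ (the quotient $c/\sqrt{a}$ stays bounded near $k=0$ because $a(k)\gtrsim|k|^2$ by positive definiteness of $A$, while $c(k)=O(|k|^4)$). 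Multiplying by $t$, the first two terms reproduce exactly the two $\pm$ phases in the definition \eqref{eq:v-eps} of $v^\eps$, while the remainder is $t\cdot O(\eps^4)\le T_0\eps^{-2}\cdot O(\eps^4)=O(\eps^2)$ on the whole time interval. Using the elementary bound $|e^{\ri\alpha}-e^{\ri\beta}|\le|\alpha-\beta|$ together with $F_0\in L^1$, the phase error is therefore $O(\eps^2)$ in $L^\infty(\R^n)$, uniformly in $t\in[0,T_0\eps^{-2}]$.

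The main obstacle is not any single one of these expansions but the uniformity over the long time horizon: one must verify that the accumulated phase error really is controlled by $t\cdot O(\eps^4)$ rather than by a worse power, and that all four error contributions combine to the single bound $C_0\eps$ in the sum norm without any of them degrading under the time scaling. Since the phase term yields $O(\eps^2)$ while the band, coefficient, and eigenfunction approximations yield $O(\eps)$, the overall estimate is $O(\eps)$, which is the assertion \eqref{eq:approx-result-SanSym}. The careful bookkeeping of these estimates, and in particular the rigorous justification of the Bloch-to-Fourier passage near $k=0$, is exactly the content of \cite{Bloch-DLS-2013}, on which the present paper relies.
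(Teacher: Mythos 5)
Your proposal is correct and matches the paper's treatment: the paper establishes this statement purely by citation to Corollary 2.5 of \cite{Bloch-DLS-2013}, and your sketch of the underlying argument --- truncation to the lowest Bloch band, replacement of $\hat f_m^\eps$ by $F_0$ and of $\tilde\psi_0$ by $(2\pi)^{-n/2}$, and the Taylor expansion of $\sqrt{\mu_0(\eps k)}/\eps$ whose $O(\eps^4)$ remainder is controlled over times $t\le T_0\eps^{-2}$ --- is exactly the chain of simplifications the paper itself describes informally when passing from \eqref{eq:Bloch-u-eps} to \eqref{eq:v-eps}.
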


The following theorem is based on energy methods. It provides the
comparison between the solution $w^\eps$ of the weakly dispersive
(homogeneous) equation and the explicit function $v^\eps$.

\begin{theorem}[Theorem 3.3 of
  \cite{Bloch-DLS-2013}]\label{thm:main-approximation}
  Let $A,C,E,F$ be tensors with the properties: $A\in \R^{n\times n}$
  is symmetric and positive definite, $\sum_{ij}A_{ij}\xi_i\xi_j\geq
  \gamma |\xi|^2$ for some $\gamma>0$, $E\in \R^{n\times n}$ and $F\in
  \R^{n\times n\times n\times n}$ are positive semi-definite and
  symmetric, $C\in \R^{n\times n\times n\times n}$ allows the
  decomposition \eqref {eq:def-coeff-EF}.  Then the following holds.

  \smallskip {\rm 1. Well-posedness.} For the initial datum $f\in
  H^2(\R^n)$, the equation
  \begin{equation}\label{eq:weakly-dispersive-copy}
    \begin{split}
      &\del_t^2 w^\eps - A D^2 w^\eps - \eps^2 \del_t^2 E D^2 w^\eps
      + \eps^2 F D^4 w^\eps = 0 \,,\\
      &w^\eps(.,0) = f, \qquad \del_t w^\eps(.,0) = 0\,
    \end{split}      
  \end{equation}
  has a unique solution $w^\eps \in L^\infty(0,T_0\eps^{-2} ;
  H^2(\R^n)) \cap W^{1,\infty}(0,T_0\eps^{-2} ; H^1(\R^n))$.\smallskip

  \smallskip {\rm 2. Approximation.} Let $v^\eps$ be defined by \eqref
  {eq:v-eps} where $F_0$ and $f$ are related by \eqref
  {eq:f-cond}. Let $w^\eps$ be a solution of
  \eqref{eq:weakly-dispersive-copy}. Then there holds
  \begin{equation}
    \label{eq:approx-result-v}
    \sup_{t\in [0,T_0 \eps^{-2}]}\|\del_t( v^\eps - w^\eps)(.,t) \|_{L^2(\R^n)}
    + \sup_{t\in [0,T_0 \eps^{-2}]}  \|\nabla( v^\eps - w^\eps)(.,t)\|_{L^2(\R^n)}
    \leq  C_0\eps^2
  \end{equation}
  with a constant $C_0$ that is independent of $\eps$.
\end{theorem}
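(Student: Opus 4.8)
The plan is to handle both assertions through the Fourier transform in $x$, since \eqref{eq:weakly-dispersive-copy} has constant coefficients. Writing $\hat w^\eps(k,t)$ for the transform and abbreviating $a(k) := \sum_{lm} A_{lm} k_l k_m$, $e(k) := \sum_{lm} E_{lm} k_l k_m$, $C(k) := \sum C_{lmnq} k_l k_m k_n k_q$ and $F(k) := \sum F_{ijkl} k_i k_j k_k k_l$, the equation decouples into the scalar family
\begin{equation*}
  (1 + \eps^2 e(k))\,\del_t^2 \hat w^\eps(k,t) + \bigl(a(k) + \eps^2 F(k)\bigr)\,\hat w^\eps(k,t) = 0 ,
\end{equation*}
with $\hat w^\eps(k,0) = \hat f(k)$ and $\del_t \hat w^\eps(k,0) = 0$. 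Hence $\hat w^\eps(k,t) = \hat f(k)\cos(\omega(k)t)$ with $\omega(k)^2 = (a(k)+\eps^2 F(k))/(1+\eps^2 e(k))$, which I will use throughout.

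Well-posedness (part 1) hinges on the sign hypotheses. Because $E$ and $F$ are positive semi-definite, $e(k)\ge0$ and $F(k)\ge0$, so the denominator is $\ge1$ and the numerator $\ge0$; thus $\omega(k)^2\ge0$ and $\omega(k)$ is real, so the oscillator solution stays bounded. Because $C$ is negative semi-definite and \eqref{eq:def-coeff-EF} reads $F(k) = e(k)a(k) + C(k)$ in Fourier variables, one has $F(k)\le e(k)a(k)$ and therefore the clean bound $0\le\omega(k)^2\le a(k)\le\Lambda|k|^2$, where $\Lambda$ is the largest eigenvalue of $A$. By Plancherel, $\|w^\eps(t)\|_{H^2}\le\|f\|_{H^2}$ since $|\cos|\le1$, while $\del_t\hat w^\eps = -\hat f\,\omega\sin(\omega t)$ together with $\omega^2\le\Lambda|k|^2$ and $f\in H^2$ yields $\del_t w^\eps\in L^\infty(0,T_0\eps^{-2};H^1)$. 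Uniqueness is immediate from uniqueness for the scalar initial value problems.

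For the approximation (part 2) I would use a residual-plus-energy argument. The function $v^\eps$ of \eqref{eq:v-eps} has transform $\hat v^\eps(k,t) = \hat f(k)\cos(\Phi(k)t)$ with phase $\Phi(k) = \sqrt{a(k)} + \tfrac{\eps^2}{2}\,C(k)/\sqrt{a(k)}$, and $v^\eps$ and $w^\eps$ carry the same initial data. Substituting $v^\eps$ into \eqref{eq:weakly-dispersive-copy} defines a residual $R^\eps$ with $\widehat{R^\eps} = (1+\eps^2 e)(\omega^2 - \Phi^2)\hat v^\eps$. Using the identity $F = ea + C$ one computes exactly $\omega^2 = a + \eps^2 C/(1+\eps^2 e)$ and $\Phi^2 = a + \eps^2 C + \tfrac{\eps^4}{4}C^2/a$, hence
\begin{equation*}
  \omega(k)^2 - \Phi(k)^2 = -\eps^4\Bigl(\frac{C(k)e(k)}{1+\eps^2 e(k)} + \frac{C(k)^2}{4\,a(k)}\Bigr).
\end{equation*}
Since $\hat f = F_0$ is supported in the fixed compact set $K$ and $a(k)\ge\gamma|k|^2$ tames the apparent singularity of $C^2/a$ at $k=0$, the bracket is bounded on $K$, so $\|R^\eps(t)\|_{L^2}\le M\eps^4$ uniformly in $t$.

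To close, set $z^\eps := v^\eps - w^\eps$, which solves \eqref{eq:weakly-dispersive-copy} with forcing $R^\eps$ and vanishing initial data. Testing with $\del_t z^\eps$ gives the energy identity $\tfrac{d}{dt}\calE_z(t) = \int_{\R^n} R^\eps\,\del_t z^\eps\,dx$ for
\begin{equation*}
  \calE_z = \tfrac12\!\int_{\R^n}\!\Bigl[|\del_t z^\eps|^2 + \nabla z^\eps\!\cdot\! A\nabla z^\eps + \eps^2\,\nabla\del_t z^\eps\!\cdot\! E\nabla\del_t z^\eps + \eps^2\,(D^2 z^\eps)\!:\!F\!:\!(D^2 z^\eps)\Bigr]dx .
\end{equation*}
Positivity of $E$ and $F$ makes all four terms nonnegative, so $\calE_z\ge\tfrac12(\|\del_t z^\eps\|_{L^2}^2 + \gamma\|\nabla z^\eps\|_{L^2}^2)$ by coercivity of $A$. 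Cauchy--Schwarz then gives $\tfrac{d}{dt}\sqrt{\calE_z}\le\tfrac{1}{\sqrt2}\|R^\eps\|_{L^2}$, and integrating from $\calE_z(0)=0$ over $t\le T_0\eps^{-2}$ produces $\sqrt{\calE_z(t)}\le\tfrac{1}{\sqrt2}\,T_0\eps^{-2}\cdot M\eps^4 = M'\eps^2$, which is precisely \eqref{eq:approx-result-v}. I expect the residual estimate to be the crux: the cancellation to order $\eps^4$ depends on inserting \eqref{eq:def-coeff-EF} at exactly the right place and on controlling the $\eps^4$-coefficient on $K$ down to $k=0$, after which the long horizon $t\sim\eps^{-2}$ converts the $O(\eps^4)$ residual into the asserted $O(\eps^2)$ error, so losing even one power of $\eps$ here would break the result.
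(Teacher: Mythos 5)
This theorem is quoted from the reference (Theorem 3.3 of the cited companion paper) and the present paper gives no proof of it, recording only that it ``is based on energy methods''; so the comparison can only be against that advertised strategy. Your Part 2 is essentially a complete and correct proof of exactly that kind: the constant coefficients let you diagonalize in Fourier variables, $\hat w^\eps=\hat f\cos(\omega t)$ with $\omega^2=(a+\eps^2F)/(1+\eps^2e)$ and $\hat v^\eps=F_0\cos(\Phi t)$ with $\Phi=\sqrt a+\tfrac{\eps^2}{2}C/\sqrt a$; inserting \eqref{eq:def-coeff-EF} in the form $F=ea+C$ yields the exact identity $\omega^2-\Phi^2=-\eps^4\bigl(Ce/(1+\eps^2e)+C^2/(4a)\bigr)$, which is $O(\eps^4)$ uniformly on the compact support $K$ of $F_0$ because $a(k)\ge\gamma|k|^2$ tames $C(k)^2/a(k)\lesssim|k|^6$ near $k=0$; and the conserved quadratic energy $\calE_z$ (nonnegative by the semi-definiteness of $E$ and $F$, coercive in $\|\del_t z\|_{L^2}$ and $\|\nabla z\|_{L^2}$ by the positivity of $A$) converts the $O(\eps^4)$ residual over the horizon $t\le T_0\eps^{-2}$ into \eqref{eq:approx-result-v}. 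The integrations by parts in the energy identity are legitimate here since $F_0$ has compact support, so $f$, $v^\eps$, $w^\eps$ are smooth with all derivatives in $L^2$.

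The one genuine gap is in Part 1. Your bound $\omega(k)^2\le a(k)\le\Lambda|k|^2$ rests on ``$C$ is negative semi-definite,'' which is \emph{not} among the hypotheses of the theorem: only the decomposability \eqref{eq:def-coeff-EF} with positive semi-definite $E,F$ is assumed, which gives $F(k)=e(k)a(k)+C(k)$ but no sign for $C(k)$. Without $C(k)\le 0$ you only obtain $\omega(k)^2\le a(k)+\eps^2F(k)\lesssim |k|^2+\eps^2|k|^4$, and then $\del_t\hat w^\eps=-\hat f\,\omega\sin(\omega t)$ lies in $H^1$ only for $f\in H^3$, not for the stated $f\in H^2$. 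So you must either add the negative semi-definiteness of $C$ as a hypothesis (it does hold for the homogenized tensor used in this paper, as noted there with a reference), or find another route to the $W^{1,\infty}(0,T_0\eps^{-2};H^1)$ membership; the $L^\infty(0,T_0\eps^{-2};H^2)$ bound via $|\cos|\le1$ and the uniqueness argument are fine as you state them.
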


Theorem \ref {thm:main} is a consequence of the above results and a
new decomposition lemma that is shown in the next subsection.  

The estimate \eqref {eq:approx-result-SanSym} provides that $\| u^\eps
- v^\eps \|$ is of order $\eps$. The norms coincide with the ones in
the claim \eqref {eq:approx-result}. It therefore remains to estimate
the difference $\| v^\eps - w^\eps \|$.

We define $A$ and $C$ through \eqref {eq:Taylor-mu} and note that $A$
is positive definite and symmetric.  The decomposition result of Lemma
\ref {lem:decompose} below allows to construct $E$ and $F$ such that
\eqref {eq:def-coeff-EF} is satisfied.  This means that Theorem \ref
{thm:main-approximation} can be applied.  It provides the
well-posedness claim and the estimate \eqref {eq:approx-result-v},
which shows that norms of derivatives of $v^\eps - w^\eps$ are of
order $\eps^2$.  The norms can be transformed with an interpolation
lemma (see Lemma 3.4 in \cite{Bloch-DLS-2013}); the result is an
estimate for $\sup_{t\in [0,T_0 \eps^{-2}]} \| v^\eps(.,t) -
w^\eps(.,t) \|_{L^2+L^\infty}$ of order $\eps$. We therefore obtain
\eqref {eq:approx-result}.

\subsection{Decomposition lemma}
\label{ssec.decomposition}

Our aim is to construct coefficient tensors $E \in \R^{n\times n}$ and
$F \in \R^{n\times n\times n\times n}$ such that the differential
operator $C D^4$ can be re-written as in \eqref {eq:def-coeff-EF}. We
impose that $E$ and $F$ are positive semi-definite and symmetric, i.e.
\begin{align}\label{eq:admissibledecompose}
  \sum_{i,j,k,l=1}^n F_{ijkl}\xi_{ij}\xi_{kl}\geq 0\quad \text{ for
    every }\xi\in\R^{n\times n}\,, \qquad F_{ijkl} = F_{klij}\,,
\end{align}
and similarly $\sum_{i,j=1}^n E_{ij}\eta_i\eta_j \geq 0$ for every
$\eta\in\R^n$ and $E_{ij}=E_{ji}$. The symmetry relations must hold
for all indices $i,j,k,l \in\{1,...,n\}$.  In this section, $n\in
\{1,2,3,4,\hdots\}$ can also be larger than $3$.

\begin{lemma}[Decomposability]\label{lem:decompose}
  Let $A\in \R^{n\times n}$ be a symmetric and positive definite
  matrix and let $C\in \R^{n\times n\times n\times n}$ be arbitrary.
  Then there exist symmetric and positive semi-definite tensors $E\in
  \R^{n\times n}$ and $F\in \R^{n\times n\times n\times n}$ such that
  the differential operator $CD^4$ can be written as in
  \eqref{eq:def-coeff-EF}.
\end{lemma}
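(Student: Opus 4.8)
The plan is to convert the operator identity \eqref{eq:def-coeff-EF} into a purely algebraic statement about quartic polynomials and then to realize the resulting quartic as a sum of squares. Since the $\del_i$ commute, a constant-coefficient operator $\sum T_{ijkl}\del_i\del_j\del_k\del_l$ is determined by its symbol $\sum T_{ijkl}k_ik_jk_kk_l$. Writing $a(k):=\sum A_{ij}k_ik_j$, $e(k):=\sum E_{ij}k_ik_j$, $c(k):=\sum C_{ijkl}k_ik_jk_kk_l$ and $f(k):=\sum F_{ijkl}k_ik_jk_kk_l$, the identity \eqref{eq:def-coeff-EF} is equivalent to the polynomial identity $f(k)=c(k)+e(k)a(k)$ for all $k\in\R^n$ (the signs from $\del_i\del_j\leftrightarrow -k_ik_j$ cancel in the product $ED^2AD^2$). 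The positivity constraints translate as follows: $E$ is symmetric positive semi-definite iff $e$ is a nonnegative quadratic form, and $F$ is symmetric with $F_{ijkl}=F_{klij}$ and positive semi-definite on matrices iff $F=\sum_\alpha V^\alpha\otimes V^\alpha$ for symmetric matrices $V^\alpha$, i.e. iff its symbol $f(k)=\sum_\alpha (k^\top V^\alpha k)^2$ is a sum of squares of quadratic forms. Thus the lemma reduces to: choose a nonnegative quadratic form $e$ so that $c+ea$ is a sum of squares.

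I would make the simplest choice $E=\lambda\,\mathrm{Id}$ with $\lambda>0$ large, so that $e(k)a(k)=\lambda\,|k|^2a(k)$, and prove that $c+\lambda\,|k|^2a$ is a sum of squares once $\lambda$ is large enough. Fix the vector $v(k)$ of the $N=n(n+1)/2$ quadratic monomials in $k$; every quartic form $p$ can be written $p(k)=v(k)^\top G\,v(k)$ for some symmetric Gram matrix $G$ (not unique), and $p$ is a sum of squares iff it admits a positive semi-definite $G$. The decisive observation is that the product $p_0(k):=|k|^2a(k)$ of the two positive-definite quadratic forms $|k|^2$ and $a(k)=k^\top A k$ admits a positive-definite Gram matrix $G_0\succ0$. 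Indeed, by simultaneous diagonalization there is an invertible $L$ with $|k|^2=|\xi|^2$ and $a(k)=\sum_i d_i\xi_i^2$, $\xi=Lk$, $d_i>0$; then $p_0=\sum_{i,j}d_j\xi_i^2\xi_j^2=\sum_i d_i(\xi_i^2)^2+\sum_{i<j}(d_i+d_j)(\xi_i\xi_j)^2$, whose Gram matrix in the $\xi$-monomials is diagonal with strictly positive entries, hence positive definite, and transporting back through the invertible change of variables $L$ preserves positive-definiteness. Writing $c(k)=v(k)^\top G_C\,v(k)$ with any fixed symmetric $G_C$, the quartic $c+\lambda p_0$ then has Gram matrix $G_C+\lambda G_0$, which is positive definite as soon as $\lambda>\|G_C\|/\lambda_{\min}(G_0)$.

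It then only remains to read off the tensors. Diagonalizing the positive semi-definite Gram matrix $G_C+\lambda G_0=\sum_\alpha w_\alpha w_\alpha^\top$ yields quadratic forms $q_\alpha(k)=w_\alpha^\top v(k)=k^\top V^\alpha k$ with symmetric $V^\alpha$ and $c+\lambda p_0=\sum_\alpha q_\alpha^2$; setting $F_{ijkl}:=\sum_\alpha V^\alpha_{ij}V^\alpha_{kl}$ produces a tensor with the required symmetries $F_{ijkl}=F_{jikl}=F_{klij}$ and with $\sum F_{ijkl}\xi_{ij}\xi_{kl}=\sum_\alpha(\sum_{ij}V^\alpha_{ij}\xi_{ij})^2\ge0$, while $E=\lambda\,\mathrm{Id}$ is positive semi-definite; by construction the symbol identity $f=c+ea$ holds, which is \eqref{eq:def-coeff-EF}.

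The step I expect to be the crux is the positive-definiteness of $G_0$. The naive attempt to match the symbol by the rank-two form $M\mapsto \mathrm{tr}(EM)\,\mathrm{tr}(AM)$ and then to enlarge $E$ cannot work, because a bounded-rank correction can never dominate the possibly maximally negative quadratic form on $\mathrm{Sym}(n)$ represented by $C$; the essential point is rather that multiplying by the ample positive form $|k|^2a$ moves the quartic into the interior of the sum-of-squares cone, and this is exactly what the positive-definite Gram matrix $G_0$ encodes. I note that this argument is dimension-free: in particular it does not rely on Hilbert's theorem that nonnegative ternary quartics are sums of squares, so it covers all $n$, as required.
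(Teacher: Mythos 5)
Your proof is correct, but it takes a genuinely different route from the paper's. The paper first diagonalizes $A$ by an orthogonal change of variables and then decomposes $C$ into tensors with a single non-trivial entry, handling each index pattern (two pairs, three coinciding indices, exactly one repeated index, all indices distinct) by an explicit case-by-case construction of $E$ and $F$ with a direct verification of semi-definiteness and of the operator identity. You instead pass to symbols, choose $E=\lambda\,\mathrm{Id}$, and reduce the lemma to showing that $c(k)+\lambda|k|^2a(k)$ is a sum of squares of quadratic forms for large $\lambda$; the decisive and correct observation is that $|k|^2a(k)$ admits a positive \emph{definite} Gram matrix (your diagonalization computation), i.e.\ lies in the interior of the SOS cone, so that any fixed Gram matrix of $c$ is absorbed once $\lambda$ exceeds an explicit eigenvalue ratio. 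Both arguments are dimension-free and avoid Hilbert-type theorems. Your version is shorter and more conceptual, and your closing remark — that the naive rank-two Gram representation $\tfrac12(E\otimes A+A\otimes E)$ of the product $e\,a$ can never dominate a general $C$, whereas a full-rank representation of the same quartic exists — is exactly the right diagnosis of why the problem is nontrivial. What the paper's longer construction buys is a set of closed-form coefficient formulas for $E$ and $F$ (Cases 1--3 and \eqref{eq:no-two-coincide}), which feed directly into Algorithm \ref{A:EF} and the explicit expressions \eqref{E:EF_sym} used in the numerics; your $E$ and $F$ are computable (via a spectral decomposition of $G_C+\lambda G_0$) but less explicit and typically larger. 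Two harmless imprecisions: the characterization of admissible $F$ should be read as a sufficient condition (a tensor satisfying \eqref{eq:admissibledecompose} decomposes into $W^\alpha\otimes W^\alpha$ with $W^\alpha$ not necessarily symmetric; your construction simply delivers the stronger fully symmetric form, which is all that is required), and you implicitly use that two constant-coefficient operators coincide iff their symbols coincide as polynomials, which is what justifies working with symbols throughout.
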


The proof of the above lemma consists of two steps. In the first and
essential step we show that the decomposability result holds for
diagonal matrices $A\in\R^{n\times n}$ and with rotated derivative
operators.  In the second step, general matrices $A$ are treated by
diagonalization.

\begin{lemma}[Decomposability for diagonal matrices
  $A$]\label{lem:decomposediag}
  Let $A$ be a positive definite diagonal matrix, $A = \mathrm{diag}
  (a_1,a_2,...,a_n)$. Let $S\in SO(n)$ be an orthogonal matrix and let
  $C\in \R^{n\times n\times n\times n}$ be arbitrary.  Then there
  exist symmetric and positive semi-definite tensors $E\in \R^{n\times
    n}$ and $F\in \R^{n\times n\times n\times n}$ such that
  \begin{align}
    \label{eq:decomposerot}
    -C\tilde D^4 = E \tilde D^2 A \tilde D^2 - F  \tilde D^4\,.
  \end{align}
  Here, the operator $\tilde D := S D$ denotes a rotated derivative.
\end{lemma}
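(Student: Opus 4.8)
The plan is to pass to symbols and turn the operator identity \eqref{eq:decomposerot} into an equality of homogeneous quartic polynomials, which can then be attacked by sum-of-squares (SOS) methods. Since the $\tilde D_i = (SD)_i$ are commuting constant-coefficient operators, each product $\tilde D_i\tilde D_j\tilde D_k\tilde D_l$ is fully symmetric in its indices, and a constant-coefficient operator is determined by its symbol. Writing $\zeta := S\xi$ (a bijection of $\R^n$ as $\xi$ ranges over $\R^n$), the symbol of $\tilde D_j$ is $\ri\zeta_j$, so the symbols of $C\tilde D^4$, $A\tilde D^2$, $E\tilde D^2$, $F\tilde D^4$ are $P_C(\zeta)$, $-Q_A(\zeta)$, $-Q_E(\zeta)$, $P_F(\zeta)$, where $P_C(\zeta)=\sum_{ijkl}C_{ijkl}\zeta_i\zeta_j\zeta_k\zeta_l$, $Q_A(\zeta)=\zeta^\top A\zeta=\sum_k a_k\zeta_k^2$, $Q_E(\zeta)=\zeta^\top E\zeta$, and $P_F(\zeta)=\sum_{ijkl}F_{ijkl}\zeta_i\zeta_j\zeta_k\zeta_l$. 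Hence the rotation $S$ disappears and \eqref{eq:decomposerot} is equivalent to the single polynomial identity
\[
  P_F(\zeta) = P_C(\zeta) + Q_E(\zeta)\,Q_A(\zeta) \qquad \text{for all } \zeta\in\R^n .
\]

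Next I would record the dictionary between definiteness of the tensors and properties of these forms. A symmetric matrix $E$ is positive semi-definite iff $Q_E\ge 0$; and a quartic form $p$ equals $P_F$ for some tensor $F$ that is symmetric and positive semi-definite in the sense of \eqref{eq:admissibledecompose} iff $p$ is a sum of squares of quadratic forms: given $p=\sum_\alpha q_\alpha^2$ with $q_\alpha(\zeta)=\zeta^\top M^{(\alpha)}\zeta$ and $M^{(\alpha)}$ symmetric, the tensor $F_{ijkl}=\sum_\alpha M^{(\alpha)}_{ij}M^{(\alpha)}_{kl}$ satisfies $F_{ijkl}=F_{klij}$, is positive semi-definite on $\R^{n\times n}$, and has $P_F=p$; the converse follows from the spectral decomposition of $F$. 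Thus the lemma reduces to: \emph{there is a positive semi-definite matrix $E$ such that $P_C+Q_E\,Q_A$ is a sum of squares of quadratic forms}. Note that $Q_E\,Q_A=\sum_{\beta,k}a_k(\ell_\beta\zeta_k)^2$ (writing $Q_E=\sum_\beta\ell_\beta^2$) is automatically SOS, so the only issue is to absorb the indefinite form $P_C$.

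I would realize this with the isotropic choice $E=\lambda\,I$, $\lambda>0$ large. Since $A$ is positive definite and diagonal, $Q_A\ge\gamma|\zeta|^2$ with $\gamma:=\min_k a_k>0$, and $\lambda|\zeta|^2 Q_A = \lambda\gamma|\zeta|^4 + \lambda|\zeta|^2\sum_k(a_k-\gamma)\zeta_k^2$, where the second summand is manifestly SOS. It therefore suffices to show that $P_C+\mu|\zeta|^4$ is SOS once $\mu$ is large enough. Writing $v(\zeta)$ for the vector of all degree-two monomials, every quartic form $p$ admits a (generally indefinite) symmetric Gram matrix $G$ with $p(\zeta)=v(\zeta)^\top G\,v(\zeta)$, and $p$ is SOS iff some such Gram matrix is positive semi-definite. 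The crux is that $|\zeta|^4$ possesses a positive \emph{definite} Gram matrix $G_0$: in the basis $\{\zeta_i^2\}\cup\{\zeta_i\zeta_j:i<j\}$ one may take the block-diagonal matrix whose $\{\zeta_i^2\}$-block equals $\tfrac12 I+\tfrac12\mathbf 1\mathbf 1^\top$ and whose $\{\zeta_i\zeta_j\}$-block equals $I$, and a direct check shows $v^\top G_0 v=|\zeta|^4$ and $G_0\succ0$. Fixing a Gram matrix $G_C$ for $P_C$, we then have $P_C+\mu|\zeta|^4=v^\top(G_C+\mu G_0)v$ with $G_C+\mu G_0\succ0$ for $\mu>\|G_C\|/\lambda_{\min}(G_0)$; its spectral decomposition yields the desired SOS representation, and hence $E$ and $F$.

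The only nontrivial point — and the step I expect to be the main obstacle — is the existence of a positive definite Gram matrix for $|\zeta|^4$, i.e. the fact that $|\zeta|^4$ lies in the interior of the SOS cone in every dimension; this is precisely what lets a large positive multiple of $|\zeta|^2Q_A$ dominate the arbitrary, possibly non-SOS form $P_C$. For $n=3$ one could alternatively invoke Hilbert's theorem that nonnegative ternary quartics are SOS, but the explicit $G_0$ above avoids any dimensional restriction and keeps the argument self-contained. Everything else — the symbol identity, the PSD$\leftrightarrow$SOS dictionary, and the elementary estimate making $G_C+\mu G_0$ positive definite — is routine bookkeeping.
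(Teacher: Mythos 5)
Your proof is correct, but it follows a genuinely different route from the paper. The paper argues combinatorially: it splits $C$ additively into single-entry tensors $c\,e_\alpha\otimes e_\beta\otimes e_\gamma\otimes e_\delta$ and, for each pattern of repeated indices (two pairs; three equal indices; one pair plus a distinct index; all four distinct, the last reduced to the repeated-index cases by subtracting an auxiliary positive tensor $\hat F$), writes down explicit entries of $E$ and $F$ and verifies \eqref{eq:decomposerot} and positivity by hand. You instead pass to symbols, observe that \eqref{eq:decomposerot} is equivalent to the polynomial identity $P_F=P_C+Q_E Q_A$, identify admissible tensors $F$ with sums of squares of quadratic forms, and absorb the indefinite form $P_C$ by taking $E=\lambda I$ with $\lambda$ large, the key point being that $|\zeta|^4$ admits a positive definite Gram matrix (your $G_0$ is correct: the two blocks reproduce $\tfrac12\sum_i\zeta_i^4+\tfrac12\bigl(\sum_i\zeta_i^2\bigr)^2+\sum_{i<j}\zeta_i^2\zeta_j^2=|\zeta|^4$), so that $G_C+\mu G_0$ is positive definite for large $\mu$. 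This is conceptually cleaner and dimension-free, and it in fact makes the reduction to diagonal $A$ (and the rotation $S$) unnecessary: your argument only uses $Q_A\ge\gamma|\zeta|^2$, so it proves Lemma \ref{lem:decompose} directly, collapsing the paper's two-lemma structure. What the paper's case analysis buys in exchange is explicitness: the closed-form entries of $E$ and $F$ feed directly into Algorithm \ref{A:EF} and the formulas \eqref{E:EF_sym} used in the numerics, and they produce small, sparse tensors, whereas your construction requires choosing $\lambda$ above a bound involving $\|G_C\|$, $\lambda_{\min}(G_0)$ and $\min_k a_k$, and typically yields larger, denser $E$ and $F$ (any admissible choice is equally valid for Theorem \ref{thm:main}, since \eqref{eq:def-coeff-EF} does not determine $E,F$ uniquely, but the size of the coefficients matters for a practical scheme).
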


\begin{proof}[Proof of Lemma \ref{lem:decomposediag}] 

  {\em Step 1: Reduction to matrices $C$ with only one non-trivial
    entry.} We note that the relation \eqref {eq:decomposerot} is
  additive in the following sense: Let $C^{(1)}$ and $C^{(2)}$ be two
  matrices, and let \eqref {eq:decomposerot} be satisfied for
  $C^{(m)}$ with tensors $E^{(m)}$ and $F^{(m)}$, $m\in \{1,2\}$.
  Then \eqref {eq:decomposerot} holds for $C^{(1)} + C^{(2)}$ with the
  two tensors $E^{(1)} + E^{(2)}$ and $F^{(1)} + F^{(2)}$.  We exploit
  here that the sum of symmetric, semi-definite tensors is again
  symmetric and semi-definite.

  This observation implies that it is sufficient to consider a tensor
  $C$ that has only one non-trivial entry. We denote a tensor with
  only one entry $1$ in the canonical way as $e_\alpha\otimes
  e_\beta\otimes e_\gamma\otimes e_\delta$.  An arbitrary tensor $C$
  can be written as a sum, $C = \sum_{\alpha\beta\gamma\delta}
  C_{\alpha\beta\gamma\delta}\, e_\alpha\otimes e_\beta\otimes
  e_\gamma\otimes e_\delta$.  After constructing tensors
  $E^{(\alpha,\beta,\gamma,\delta)}$ and
  $F^{(\alpha,\beta,\gamma,\delta)}$ according to the one-entry tensor
  $C_{\alpha\beta\gamma\delta}\, e_\alpha\otimes e_\beta\otimes
  e_\gamma\otimes e_\delta$, we find $E$ and $F$ according to $C$ by a
  summation, $E = \sum_{\alpha\beta\gamma\delta}
  E^{(\alpha,\beta,\gamma,\delta)}$ and $F =
  \sum_{\alpha\beta\gamma\delta} F^{(\alpha,\beta,\gamma,\delta)}$.

  In the following construction, we restrict ourselves to a fixed
  choice of indices, $(\alpha,\beta,\gamma,\delta)\in
  \{1,...,n\}^4$. For a number $c\in \R$, we can consider a tensor $C$
  of the form $C = c\, e_\alpha\otimes e_\beta\otimes e_\gamma\otimes
  e_\delta$.

  Our aim is to re-write the differential operator $- C \tilde D^4 =
  c\, \tilde D_\alpha \tilde D_\beta \tilde D_\gamma \tilde D_\delta$.
  We use here $\tilde D_i:=\sum_{j=1}^n S_{ij}\del_{x_j}$ for the
  $i$-th component of the rotated gradient. In the following, $\{a\}_+
  := \mathrm{max}\{0,a\}$ denotes the positive part of a number $a\in
  \R$.

  {\em Step 2: Construction of $E$ and $F$ for $C = c\,
    e_\alpha\otimes e_\beta\otimes e_\gamma\otimes e_\delta$, where at
    least two indices coincide.}

  {\bf Case 1.} The indices $\alpha,\beta,\gamma,\delta$ contain two
  different pairs, i.e.  $(\alpha,\beta,\gamma,\delta) = (i,i,j,j)$ or
  $(\alpha,\beta,\gamma,\delta) = (i,j,i,j)$ or
  $(\alpha,\beta,\gamma,\delta) = (i,j,j,i)$ for $i,j\in\{1,...,n\}$.
  We restrict ourselves to $(\alpha,\beta,\gamma,\delta) = (i,i,j,j)$,
  the permutations define the same operator $C\tilde D^4$. We define
  the tensors $E = E^{(\alpha,\beta,\gamma,\delta)}$ and $F =
  F^{(\alpha,\beta,\gamma,\delta)}$ through
  \begin{align*}
    E_{ii} &:=\frac{\{-c\}_+}{a_j}\,,
    \qquad F_{ijij} := \{c\}_+\,,
    \qquad F_{imim} := \frac{\{-c\}_+}{a_j}a_m\,,
  \end{align*}
  for all $m\in\{1,...,n\}$ with $m\neq j$. All other entries of $E$
  and $F$ are set to zero.

  {\it Properties of $E$ and $F$.} By definition $E$ and $F$ are
  symmetric and positive semi-definite. A direct calculation yields
  the decomposition property:
  \begin{align*}
    &E\tilde D^2A\tilde D^2-F\tilde D^4 \\
    &\qquad= \left(\frac{\{-c\}_+}{a_j}\tilde
      D_i^2\right)\left(\sum_{m} a_m \tilde D_m^2\right)
    -\{c\}_+\tilde D_i^2\tilde D_j^2 - \sum_{m\neq
      j}\frac{\{-c\}_+}{a_j}a_m\tilde D_i^2 \tilde D_m^2
    \displaybreak[2]\\
    &\qquad= \{-c\}_+\tilde D_i^2\tilde D_j^2 + \sum_{m\neq
      j}\frac{\{-c\}_+}{a_j}a_m\tilde D_i^2 \tilde D_m^2
    -\{c\}_+\tilde D_i^2\tilde D_j^2
    - \sum_{m\neq j}\frac{\{-c\}_+}{a_j}a_m\tilde D_i^2 \tilde D_m^2\\
    &\qquad= \left(\{-c\}_+-\{c\}_+\right)\tilde D_i^2\tilde D_j^2
    =-c\tilde D_i^2\tilde D_j^2 = - C \tilde D^4\,\,.
  \end{align*}

  {\bf Case 2.} The indices $\alpha,\beta,\gamma,\delta$ contain three
  identical entries, i.e.  $(\alpha,\beta,\gamma,\delta) = (i,i,i,j)$
  or $(\alpha,\beta,\gamma,\delta) = (i,i,j,i)$ or
  $(\alpha,\beta,\gamma,\delta) = (i,j,i,i)$ or
  $(\alpha,\beta,\gamma,\delta) = (j,i,i,i)$ for $i,j\in\{1,...,n\}$
  with $i\neq j$.  We restrict ourselves to
  $(\alpha,\beta,\gamma,\delta) = (i,i,i,j)$ since the other cases
  define the same differential operator.  We define the tensors $E =
  E^{(\alpha,\beta,\gamma,\delta)}$ and $F =
  F^{(\alpha,\beta,\gamma,\delta)}$ through
  \begin{align*}
    E_{ij} & := E_{ji} := -\frac{c}{2a_i} =: \tilde c\,,
    \quad E_{ii}:=E_{jj}:= |\tilde c|\\
    F_{imim}&:=F_{jmjm}:= \left|\tilde c\right| a_m\,
    \quad \text{ for all }m\in\{1,...,n\}\\
    F_{imjm}&:=F_{jmim}:= \tilde c a_m\,\quad\quad \text{ for all }
    m\in\{1,...,n\}\text{ with }m\neq i.
  \end{align*}
  All other entries of $E$ and $F$ are set to zero.

  {\it Properties of $E$ and $F$.} By definition, $E$ and $F$ are
  symmetric.  Concerning the positive semi-definiteness of $E$ and $F$
  we calculate, for arbitrary $\xi\in\R^n$ and $\zeta\in{R^{n\times
      n}}$,
  \begin{align*}
    &\sum_{l,m} E_{lm} \xi_{l}\xi_{m} = |\tilde c|\xi_{i}^2 + |\tilde
    c|\xi_j^2 + 2\tilde c\xi_i\xi_j \geq |\tilde c|\xi_{i}^2 + |\tilde
    c|\xi_j^2
    - \left(|\tilde c|\xi_i^2 + |\tilde c|\xi_j^2\right)=0,\\
    &\sum_{l,m,p,q} F_{lmpq} \zeta_{lm} \zeta_{pq}
    = \sum_{m} |\tilde c|a_m\zeta_{im}^2 + \sum_{m}|\tilde
    c|a_m\zeta_{jm}^2 + \sum_{m\neq i} 2\tilde ca_m\zeta_{im}\zeta_{jm}\\
    &\qquad \geq \sum_{m}|\tilde c|a_m\zeta_{im}^2 + \sum_{m}|\tilde
    c|a_m\zeta_{jm}^2 - \sum_{m\neq i}(|\tilde
    c|a_m\zeta_{im}^2+|\tilde c|a_m\zeta_{jm}^2)\geq 0\,.
  \end{align*}
  Concerning the decomposition property we calculate
  \begin{align*}
    &E\tilde D^2A\tilde D^2-F\tilde D^4\\
    &\quad =\left(|\tilde c|\tilde D_i^2 + |\tilde c|\tilde D_j^2
      -\frac{c}{a_i}\tilde D_i\tilde D_j\right)\left(\sum_{m} a_m \tilde D_m^2\right)\\
    &\qquad -\sum_m |\tilde c|a_m (\tilde D_i^2\tilde D_m^2 +\tilde
    D_j^2\tilde D_m^2) + \sum_{m\neq i}a_m\frac{c}{a_i}\tilde
    D_i\tilde D_j\tilde D_m^2
    \displaybreak[2]\\
    &\quad =\sum_m |\tilde c|a_m (\tilde D_i^2\tilde D_m^2 +\tilde
    D_j^2\tilde D_m^2) - c\tilde D_i^3\tilde D_j
    - \sum_{m\neq i}a_m\frac{c}{a_i}\tilde D_i\tilde D_j\tilde D_m^2\displaybreak[2]\\
    &\qquad -\sum_m |\tilde c|a_m (\tilde D_i^2\tilde D_m^2 +\tilde
    D_j^2\tilde D_m^2) + \sum_{m\neq i}a_m\frac{c}{a_i}\tilde
    D_i\tilde D_j\tilde D_m^2 = -c\tilde D_i^3\tilde D_j = - C \tilde
    D^4\,\,.
  \end{align*}

  {\bf Case 3.} The indices $\alpha,\beta,\gamma,\delta$ contain two
  identical entries, the other entries are different. We restrict
  ourselves to the case $(\alpha,\beta,\gamma,\delta) = (i,i,j,k)$
  with three different indices $i, j, k \in \{1,...,n\}$, permutations
  of the indices define the same operator.  We define the tensors $E =
  E^{(\alpha,\beta,\gamma,\delta)}$ and $F =
  F^{(\alpha,\beta,\gamma,\delta)}$ through
  \begin{align*}
    E_{jk}&:=E_{kj}:=-\frac{c}{2a_i}=:\tilde c,\quad E_{kk}:=E_{jj}:=|\tilde c|\\
    F_{kmkm}&:=F_{jmjm}:= |\tilde c|a_m,\quad \text{ for all }m\in\{1,...,n\}\\
    F_{kmjm}&:=F_{jmkm}:=\tilde ca_m\quad\quad 
    \text{ for all }m\in\{1,...,n\}\text{ with }m\neq i.
  \end{align*}
  All other entries of $E$ and $F$ are set to zero.

  {\it Properties of $E$ and $F$.} By definition, $E$ and $F$ are
  symmetric.  Concerning the positive semi-definiteness of $E$ and
  $F$, we calculate for arbitrary $\xi\in\R^n$ and
  $\zeta\in{R^{n\times n}}$
  \begin{align*}
    &\sum_{l,m} E_{lm} \xi_{l} \xi_{m} = |\tilde c|\xi_{k}^2 + |\tilde
    c|\xi_j^2 + 2\tilde c\xi_k\xi_j \ge 0\,,\\
    &\sum_{l,m,p,q} F_{lmpq} \zeta_{lm} \zeta_{pq} = \sum_{m}
    |\tilde c| a_m \zeta_{km}^2 + \sum_{m}|\tilde c|a_m\zeta_{jm}^2
    + \sum_{m\neq i} 2\tilde ca_m\zeta_{km}\zeta_{jm} \ge 0\,.
  \end{align*}
  Regarding the decomposition property we calculate
  \begin{align*}
    &E\tilde D^2A\tilde D^2-F\tilde D^4\\
    &\quad = \left(|\tilde c|\tilde D_k^2 + |\tilde c|\tilde D_j^2
      -\frac{c}{a_i}\tilde D_k\tilde D_j\right)\left(\sum_{m} a_m \tilde D_m^2\right)\\
    &\qquad -\sum_m |\tilde c|a_m (\tilde D_k^2\tilde D_m^2 +\tilde
    D_j^2\tilde D_m^2) +
    \sum_{m\neq i}a_m\frac{c}{a_i}\tilde D_k\tilde D_j\tilde D_m^2\\
    &\quad =\sum_m |\tilde c|a_m (\tilde D_k^2\tilde D_m^2 +\tilde
    D_j^2\tilde D_m^2) - c\tilde D_k\tilde D_j\tilde D_i^2
    - \sum_{m\neq i}a_m\frac{c}{a_i}\tilde D_k\tilde D_j\tilde D_m^2\\
    &\qquad -\sum_m |\tilde c|a_m (\tilde D_k^2\tilde D_m^2 +\tilde
    D_j^2\tilde D_m^2) + \sum_{m\neq i}a_m\frac{c}{a_i}\tilde
    D_k\tilde D_j\tilde D_m^2 = -c\tilde D_k\tilde D_j \tilde D_i^2 =
    - C \tilde D^4\,\,.
  \end{align*}

  {\em Step 3: Construction of $E$ and $F$ for $C = c\,
    e_\alpha\otimes e_\beta\otimes e_\gamma\otimes e_\delta$, where no
    two indices coincide.}  We treat now the case
  $(\alpha,\beta,\gamma,\delta) = (i,j,k,l)$ with pairwise different
  indices $i,j,k,l\in\{1,...,n\}$; our aim is to rewrite the operator
  $C \tilde D^4 = c \tilde D_i\tilde D_j\tilde D_k\tilde D_l$. We
  obtain corresponding matrices in two steps: (a) We define a positive
  semi-definite, symmetric tensor $\hat F\in\R^{n\times n\times
    n\times n}$ in such a way that $\bar C := C - \hat F$ has only
  non-trivial entries at positions with repeated indices.  (b) We
  apply Step 2 of this proof to the remainder $\bar C$, which provides
  $\bar E$ and $\bar F$ with $-\bar C\tilde D^4 = \bar E \tilde D^2 A
  \tilde D^2 - \bar F \tilde D^4$. The desired tensors $E =
  E^{(\alpha,\beta,\gamma,\delta)}$ and $F =
  F^{(\alpha,\beta,\gamma,\delta)}$ are then obtained as $E := \bar E$
  and $F := \bar F + \hat F$.

  \smallskip We set
  \begin{equation}\label{eq:no-two-coincide}
    \hat F_{ijkl} := \hat F_{klij} := \frac12 c\,, 
    \qquad \hat F_{ijij} := \hat F_{klkl} := \frac12|c|\,.
  \end{equation}
  All other entries of $\hat F$ are set to zero. The symmetry of $\hat
  F$ is obvious; regarding positivity we calculate
  \begin{align*}
    \sum_{m,p,q,r}\hat F_{mpqr} \zeta_{mp}\zeta_{qr} &=
    \frac12|c|\zeta_{ij}^2 + \frac12 |c|\zeta_{kl}^2 +
    c\zeta_{ij}\zeta_{kl} \ge 0\,.
  \end{align*}

  It remains to check that Step 2 of this proof can be applied to the
  remainder $\bar C := C - \hat F$. We evaluate
  \begin{align*}
    C D^4 - \hat F D^4 &= 
    c\,\tilde D_i\tilde D_j\tilde D_k\tilde D_l 
    - \left[ c\,\tilde D_i\tilde D_j\tilde D_k\tilde D_l
      + \frac12 |c| \tilde D_i^2\tilde D_j^2
      + \frac12 |c| \tilde D_k^2\tilde D_l^2\right]\\
    &= - \frac12 |c| \left(\tilde D_i^2\tilde D_j^2 +
      \tilde D_k^2\tilde D_l^2\right).
  \end{align*} 
  This operator has nontrivial entries only for repeated indices, it
  therefore posesses a decomposition by Step 2.  This concludes the
  proof of the lemma.
\end{proof}

With Lemma \ref{lem:decomposediag} at hand we are in the position to
prove the general decomposition result of Lemma \ref{lem:decompose}.

\begin{proof}[Proof of Lemma \ref{lem:decompose}] 
  The symmetry of $A$ implies that $A$ is diagonalizable: there exists
  an orthogonal matrix $S\in SO(n)$ such that
  \begin{equation*}
    A=S^T\tilde A S\quad\text{with }\ 
    \tilde A=\mathrm{diag}( a_1, a_2,..., a_n).
  \end{equation*}
  Since $A$ is positive definite, the eigenvalues $a_i$ are positive.
  Our aim is to apply Lemma \ref{lem:decomposediag} with the diagonal
  matrix $\tilde A$ and a tensor $\tilde C$, which is defined from $C$
  with the transformation $S$.  Lemma \ref{lem:decomposediag} provides
  tensors $\tilde E$ and $\tilde F$ that can be transformed back into
  the desired tensors $E$ and $F$.

  {\em Step 1: Construction of $\tilde C$.}  Here, we denote the space
  of matrices by $M := \R^{n\times n}$. The tensor $C$ defines a
  linear map $C: M\to M$ through $C (e_i\otimes e_j))_{kl} =
  C_{ijkl}$. We define a transformed tensor $\tilde C: M \to M$
  through
  \begin{equation}\label{eq:CtildeC}
    \tilde C : M \ni B \mapsto S\cdot C(S^T B S)\cdot S^T\in M\,.
  \end{equation}

  As we show next, with $\tilde D:= SD$, the corresponding
  differential operators coincide, $CD^4= \tilde C\tilde D^4$. We use
  the convention that sums are over repeated indices.
  \begin{equation}\label{eq:calc8346}
    \begin{split}
    \tilde C\tilde D^4 &= \sum (\tilde C(e_i\otimes e_j))_{kl} \tilde
    D_i \tilde D_j \tilde D_k \tilde D_l \\
    &= \sum S_{k\alpha}\, C(S^T\cdot e_i\otimes e_j\cdot S)_{\alpha\beta}
    S_{l\beta} \tilde  D_i \tilde D_j \tilde D_k \tilde D_l\\
    &= \sum S_{k\alpha} S_{i\gamma} S_{j\delta}\, C(e_\gamma\otimes
    e_\delta)_{\alpha\beta} S_{l\beta}\
    S_{i\xi} D_\xi\, S_{j\zeta} D_\zeta\, S_{k\eta} D_\eta\, S_{l\theta} D_\theta\\
    &= \sum C(e_\gamma\otimes
    e_\delta)_{\alpha\beta} D_\alpha D_\beta D_\gamma D_\delta
    = CD^4\,.      
    \end{split}
  \end{equation}

  {\em Step 2: Application of Lemma \ref{lem:decomposediag}.}  Since
  $\tilde A$ is diagonal, we can apply Lemma \ref{lem:decomposediag}
  with the data $\tilde A$, $S$, and $\tilde C$. We find symmetric,
  positive semi-definite tensors $\tilde E\in \R^{n\times n}$ and
  $\tilde F\in\R^{n\times n\times n\times n}$ such that, with $\tilde
  D:= SD$,
  \begin{align}
    \label{eq:decomposerotated}
    -\tilde C\tilde D^4 = \tilde E \tilde D^2\tilde A\tilde D^2
    - \tilde F\tilde D^4.
  \end{align}

  We can now define the desired tensors $E$ and $F$ through $E:=S^T
  \tilde E S$ and
  \begin{equation}\label{eq:invert-rotation}
    F : M \ni B \mapsto S^T\cdot \tilde F(S B S^T)\cdot S\in M\,.
  \end{equation}
  Since $\tilde F$ is obtained from $F$ by the same formula as $\tilde
  C$ is obtained from $C$, cf. \eqref {eq:CtildeC}, the calculation of
  \eqref{eq:calc8346} yields $\tilde F\tilde D^4 = F D^4$.

  Regarding the operator $\tilde E \tilde D^2$ we calculate
  \begin{align*}
    \tilde E \tilde D^2 &= \sum (S E S^T)_{ij} \tilde D_i \tilde D_j
    = \sum S_{i\alpha} E_{\alpha\beta} S_{j\beta} S_{ik} D_k S_{jl} D_l\\
    &= \sum E_{kl} D_k D_l = E D^2\,.
  \end{align*}
  The calculation can also be applied to $A = S^T \tilde A S$ and
  provides $\tilde A \tilde D^2 = A D^2$.  We conclude that relation
  \eqref {eq:decomposerotated} coincides with
  \begin{align*}
    -CD^4 = ED^2AD^2 - FD^4\,.
  \end{align*}
  This is the decomposition result for the tensors $C$ and $A$. We
  remark that, since $S\in SO(n)$ is an orthogonal matrix, the
  symmetry and positive semi-definiteness of $\tilde E$ and $\tilde F$
  carry over to $E$ and $F$.  This concludes the proof of the general
  decomposition lemma.
\end{proof}

\section{An $\eps$-independent effective equation}
\label{sec.KdVrays}

The aim of this section is to carry the analysis of the weakly
dispersive effective equation one step further.  We will introduce
moving frame coordinates which will allow us to follow the main pulse
of $w^\eps$ along rays through the origin. Performing the limit
$\eps\rightarrow 0$ in distributional sense will provide an
$\eps$-independent linear third order equation (a linearized
KdV-equation) that describes the effective shape of the pulse in
dependence on the ray direction.

In the following we will restrict ourselves to the analysis of the
two-dimensional case with even symmetry,
\begin{equation}
  \label{eq:refl_sym}
  a_Y(y_1,y_2)=a_Y(-y_1,y_2)=a_Y(y_1,-y_2) \ \text{for all} \ y\in \R^2.
\end{equation}
The above symmetry assumption, which is in particular satisfied in the
case of a laminated structure, guarantees that the effective
coefficients $A$ and $C$ have the form (cf. proof of Lemma 2.6 in
\cite{Bloch-DLS-2013})
\begin{align}
  \label{eq:coefficients1}
  A&=\mathrm{diag}(a_1,a_2),\\
  \label{eq:coefficients2}
  C_{iiii}&=:\alpha_i,\quad C_{ijij}=C_{ijji}=C_{iijj}=:\beta\,\text{
    for }i,j\in\{1,2\}\text{ with }i\neq j.
\end{align}
All other entries of $C$, that are not mentioned above, vanish.

We expect that the main pulse of $w^\eps$, solution to the weakly
dispersive equation, propagates with a direction-dependent speed
according to the anisotropic matrix $A=\mathrm{diag}(a_1,a_2)$. We
introduce appropriate elliptic coordinates $(r,\varphi)$ through
\begin{align}
  \label{eq:ellipticcoord}
  (x_1,x_2) = (r\sqrt{a_1}\cos{\varphi}, r\sqrt{a_2}\sin{\varphi}).
\end{align}
The above coordinate transform is chosen in such a way that the main
pulse of $w^\eps$ at time t is located along the ellipse that is given
as the level set $\{ x\in \R^2 | r = t \}$. In order to perform a fine
analysis of the dynamics of the pulse, we rewrite $w^\eps$ as a
function of $(r,\varphi,t)$ and use a moving frame in the radial
variable $r$.  More precisely, given the solution $w^\eps$, we define
$W^\eps$ through
\begin{align}
  \label{eq:movinframefct}
  W^\eps(r,\varphi,t) :=
  \begin{cases}
    w^\eps\left(r+\frac{t}{\eps^2},\varphi,\frac{t}{\eps^2}\right)
    \qquad &\text{ for } r > -t \eps^{-2}\\
    0\qquad &\text{ for } r \le -t \eps^{-2}\,.
  \end{cases}
\end{align}
The time scaling $t/\eps^2$ accounts for the fact that the dispersive
effects of $w^\eps$ are weak, i.e. slow in time.  The main result of
this section is the following. Provided that $W^\eps$ has a
distributional limit $W$, then $W$ is characterized by an
$\eps$-independent linearized KdV-eqation.

\begin{proposition}[Effective behavior in the moving frame]
  \label{prop:movingframe} 
  Let the medium $a_Y:\R^2\rightarrow\R^{2\times 2}$ be evenly
  symmetric in the sense of \eqref{eq:refl_sym}.  Let
  $w^\eps(r,\varphi,t)$ be the solution to the weakly dispersive wave
  equation \eqref{eq:weakly-dispersive-copy}, expressed in elliptic
  coordinates. Let $W^\eps(r,\varphi,t)$ be defined by \eqref
  {eq:movinframefct}.  Assume that $W^\eps$ has a limit in the sense
  of distributions,
  \begin{align*}
    W^\eps\weakto W\quad\text{ in }\, \mathcal{D}'(\R \times\R\times
    (0,T)).
  \end{align*}
  Then the distribution $U:=\del_r W$ satisfies the following
  linearized cylindrical Korteweg-de-Vries-equation (linearized
  KdV-equation) in distributional sense
  \begin{align}
    \label{eq:effKdV}
    \del_t U + \frac{1}{2t}U -\frac{1}{2}\kappa(\varphi)\del^3_r U=0.
  \end{align}
  Here, the dispersion coefficient $\kappa$ is given by
  \begin{equation}  \label{eq:kappa-sincos}
    \kappa(\varphi) := 6\beta
    \frac{\cos^2{(\varphi)}}{a_1}\frac{\sin^2{(\varphi)}}{a_2} +
    \alpha_1\frac{\cos^4{(\varphi)}}{a_1^2} +
    \alpha_2\frac{\sin^4{(\varphi)}}{a_2^2}.
  \end{equation} 
\end{proposition}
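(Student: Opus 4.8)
The plan is to transport the weakly dispersive equation \eqref{eq:weakly-dispersive-copy} into the elliptic coordinates \eqref{eq:ellipticcoord}, then into the moving frame \eqref{eq:movinframefct}, and to read off the surviving balance as $\eps\to 0$. First I would pass to the stretched variables $\xi_i := x_i/\sqrt{a_i}$, in which the anisotropic principal part becomes the flat Laplacian, $A D^2 = \del_{\xi_1}^2 + \del_{\xi_2}^2 =: \Delta_\xi$, and in which $(r,\varphi)$ are polar coordinates; writing $\rho$ for the polar radius one has $\Delta_\xi = \del_\rho^2 + \rho^{-1}\del_\rho + \rho^{-2}\del_\varphi^2$. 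Denoting by $\tau$ the physical time of $w^\eps$ and using the equation itself to substitute $\del_\tau^2 w^\eps = A D^2 w^\eps + O(\eps^2)$ inside the $\eps^2$-terms, the decomposition identity \eqref{eq:def-coeff-EF} collapses the two dispersive contributions into a single operator, $-\eps^2\del_\tau^2 E D^2 w^\eps + \eps^2 F D^4 w^\eps = \eps^2 C D^4 w^\eps + O(\eps^4)$. The equation thus takes the reduced form $\del_\tau^2 w^\eps - \Delta_\xi w^\eps + \eps^2 C D^4 w^\eps = O(\eps^4)$.

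Next I would localize $C D^4$ near the wavefront. Since the pulse sits at radius $\rho = \tau = t\eps^{-2}$, which tends to infinity, the angular parts $O(\rho^{-1})\del_\varphi$ in the polar expression of $\del_{\xi_i}$ are subdominant, and the leading contribution of $C D^4$ is the pure radial operator obtained from $\del_{\xi_1}\approx\cos\varphi\,\del_\rho$, $\del_{\xi_2}\approx\sin\varphi\,\del_\rho$. Using $\del_{x_i} = a_i^{-1/2}\del_{\xi_i}$ together with the reduced form \eqref{eq:coefficients1}--\eqref{eq:coefficients2} of $C$ (which yields $C D^4 = \alpha_1\del_{x_1}^4 + \alpha_2\del_{x_2}^4 + 6\beta\,\del_{x_1}^2\del_{x_2}^2$), a direct computation of the fourth-order radial symbol gives precisely $C D^4 w^\eps = \kappa(\varphi)\,\del_\rho^4 w^\eps + (\text{lower order in }\rho^{-1})$ with $\kappa$ as in \eqref{eq:kappa-sincos}. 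This is the step that produces the claimed dispersion coefficient.

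I would then insert the moving-frame relations. From \eqref{eq:movinframefct}, with $\rho = r + t\eps^{-2}$, one obtains the operator identities $\del_\rho = \del_r$ and $\del_\tau = \eps^2\del_t - \del_r$, hence $\del_\tau^2 - \del_\rho^2 = \eps^4\del_t^2 - 2\eps^2\del_t\del_r$, while $\rho^{-1} = \eps^2(t + \eps^2 r)^{-1}$ and $\rho^{-2} = \eps^4(t+\eps^2 r)^{-2}$. Substituting and dividing by $-2\eps^2$, every term carries a factor $\eps^2$ except the three essential ones, and the equation becomes
\begin{equation*}
  \del_t\del_r W^\eps + \tfrac12 (t+\eps^2 r)^{-1}\del_r W^\eps
  - \tfrac12\kappa(\varphi)\,\del_r^4 W^\eps
  = \tfrac12\eps^2\del_t^2 W^\eps - \tfrac12\eps^2(t+\eps^2 r)^{-2}\del_\varphi^2 W^\eps + O(\eps^2)\,.
\end{equation*}
Passing to the limit in $\mathcal{D}'(\R\times\R\times(0,T))$ and using $W^\eps\weakto W$, the terms with an explicit prefactor $\eps^2$ vanish, the coefficient $(t+\eps^2 r)^{-1}$ converges to $t^{-1}$ in $C^\infty_{\loc}(\{t>0\})$ and may be transferred onto the test function, and the shrinking cut-off region $r\le -t\eps^{-2}$ of \eqref{eq:movinframefct} escapes the support of any fixed test function. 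This yields $\del_t\del_r W + \tfrac1{2t}\del_r W - \tfrac12\kappa(\varphi)\del_r^4 W = 0$, which is \eqref{eq:effKdV} for $U = \del_r W$.

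The main obstacle is the rigorous passage to the distributional limit: I must justify that multiplying the weakly convergent family $W^\eps$ by the $\eps$-dependent smooth coefficients $\rho^{-1}$ and $\rho^{-2}$ is compatible with weak convergence — which works because, after extracting the explicit $\eps^2$, these coefficients converge locally smoothly on $\{t>0\}$ and can be moved onto test functions — and that the $O(\eps^4)$ remainder from the substitution $\del_\tau^2\to A D^2$ genuinely vanishes in $\mathcal{D}'$. The substitution is exactly where the decomposition lemma pays off: it is what lets the two ill-separated terms $\del_\tau^2 E D^2 w^\eps$ and $F D^4 w^\eps$ be replaced by the single, easily localizable operator $C D^4 w^\eps$, whose radial symbol is what delivers $\kappa(\varphi)$.
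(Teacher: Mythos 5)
Your proposal is correct and follows essentially the same route as the paper: transform to elliptic/stretched coordinates, use the equation together with the decomposition \eqref{eq:def-coeff-EF} to replace $-\eps^2\del_t^2 ED^2 w^\eps+\eps^2 FD^4 w^\eps$ by $\eps^2 CD^4 w^\eps$ up to a remainder with an extra $\eps^2$, extract the pure radial part of $CD^4$ (your wavefront localization is exactly the content of the paper's Lemma \ref{lem:auxiliarydistr}, where every non-radial term carries a factor $(\eps^2/(\eps^2 r+t))^k$), and pass to the distributional limit after dividing by $-2\eps^2$. The only cosmetic difference is that you assemble the whole reduced PDE before changing frame, whereas the paper treats the leading block $\del_t^2-AD^2$ and the dispersive block in two separate steps.
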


The angle-dependent coefficient $\kappa(\varphi)$ can also be
expressed with $C$,
\begin{equation}
  \label{eq:kappa-C}
  \kappa(\varphi) = \sum_{ijkl} C_{ijkl} \xi_i \xi_j \xi_k \xi_l\quad
  \text{ for }\quad  
  \xi = \left( \frac1{\sqrt{a_1}} \cos(\fhi), \frac1{\sqrt{a_2}}
    \sin(\fhi)\right)\,.
\end{equation}
It can be understood as a measure for the amount of dispersion along
the ray $R_\varphi := \{ x = (r\sqrt{a_1}\cos{\varphi},
r\sqrt{a_2}\sin{\varphi}) \,|\, r\in (0,\infty) \}$.  Note that, due
to the negative semi-definiteness of $C$, the dispersion coefficient
is always nonpositive: $\kappa(\varphi)\leq 0$ for every angle
$\varphi$.

In the proof of the above proposition we will need some elementary
formulas, collected in the following remark.

\begin{remark}[Derivatives in elliptic coordinates]
  \label{rem:derivativeelliptic}
  Let $(r,\varphi)$ denote the elliptic coordinates defined in
  \eqref{eq:ellipticcoord}. Then the following relationship between
  derivatives in Cartesian and elliptic coordinates holds:
  \begin{align}
    \label{eq:der1}
    \del_{x_1} &=\frac{1}{\sqrt{a_1}}\left(\cos{\varphi}\,\del_r
      -\frac{1}{r}\sin{\varphi}\,\del_\varphi\right),\\
    \label{eq:der2}
    \del_{x_2} &=\frac{1}{\sqrt{a_2}}\left(\sin{\varphi}\,\del_r
      +\frac{1}{r}\cos{\varphi}\,\del_\varphi\right),\\
    \label{eq:der3}
    AD^2&=a_1\del^2_{x_1}+a_2\del^2_{x_2}
    =\del^2_r+\frac{1}{r}\del_r+\frac{1}{r^2}\del^2_{\varphi}\,.
  \end{align}
  For general derivatives of order $\gamma = \gamma_1+\gamma_2$ with
  $\gamma_1,\gamma_2\in\N_0$ one has
  \begin{align}
    \begin{split}
      \label{eq:multiderivative}
      \del^{\gamma_1}_{x_1}\del^{\gamma_2}_{x_2}&=
      \frac{1}{(\sqrt{a_1})^{\gamma_1}(\sqrt{a_2})^{\gamma_2}}P^{\gamma_1,\gamma_2}
      \left(\cos{\varphi},\sin{\varphi},\del_r,\del_\varphi,\tfrac{1}{r}\right)\\
      &=\frac{1}{(\sqrt{a_1})^{\gamma_1}(\sqrt{a_2})^{\gamma_2}}
      \left(\cos^{\gamma_1}{(\varphi)}\sin^{\gamma_2}{(\varphi)}\del^{\gamma}_r
        + \tilde P^{\gamma_1,\gamma_2}(\cos{\varphi},\sin{\varphi},\del_r,\del_\varphi,\tfrac{1}{r})\right),
    \end{split}
  \end{align}
  where $P^{\gamma_1,\gamma_2}$ and $\tilde P^{\gamma_1,\gamma_2}$ are
  polynomials and $\tilde P^{\gamma_1,\gamma_2}$ can be written as
  \begin{align*}
    \tilde
    P^{\gamma_1,\gamma_2}(\cos{\varphi},\sin{\varphi},\del_r,\del_\varphi,\tfrac{1}{r})
    = \sum_{k=1}^{\gamma}
    Q^{\gamma_1,\gamma_2}_k(\cos{\varphi},\sin{\varphi},\del_r,\del_\varphi)\frac{1}{r^k}\,,
  \end{align*}
  where $Q^{\gamma_1,\gamma_2}_k$ is a polynomial in $\cos{\varphi},
  \sin{\varphi}, \del_r, \del_\varphi$ of degree at most $2\gamma$.
\end{remark}

Equations \eqref {eq:der1}--\eqref {eq:der3} are elementary identities
for elliptic coordinates; \eqref {eq:multiderivative} follows easily
by an induction argument.  In the decomposition of
$P^{\gamma_1,\gamma_2}$, we wrote the term of highest order in
$\del_r$ explicitely, with the result that all other terms (collected
in $\tilde P^{\gamma_1,\gamma_2}$) contain non-vanishing powers of
$\frac{1}{r}$.

A consequence of the formulas \eqref {eq:der1}-\eqref
{eq:multiderivative} is the following.

\begin{lemma}
  \label{lem:auxiliarydistr}
  Let $w^\eps$ and $W^\eps$ be related by moving frame coordinates as
  in \eqref {eq:movinframefct} and let $W$ be a distributional limit
  of $W^\eps$ as in Proposition \ref{prop:movingframe}. Let
  $\gamma_1,\gamma_2\in\N_0$ and $\gamma = \gamma_1+\gamma_2$.  Then
  the following distributional convergence holds
  \begin{align*}
    \left(\del^{\gamma_1}_{x_1}\del^{\gamma_2}_{x_2} w^\eps\right)
    \left(r+\frac{t}{\eps^2},\varphi, \frac{t}{\eps^2}\right) \weakto
    \frac{\cos^{\gamma_1}{(\varphi)}
      \sin^{\gamma_2}{(\varphi)}}{(\sqrt{a_1})^{\gamma_1}(\sqrt{a_2})^{\gamma_2}}
    \,\del^{\gamma}_r\, W\quad
  \end{align*}
  for $\eps\rightarrow 0$ in $\mathcal{D}'(\R\times\R\times (0,T))$.
\end{lemma}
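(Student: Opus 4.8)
The plan is to substitute the elliptic-coordinate expansion \eqref{eq:multiderivative} for the Cartesian operator, to read off the top-order radial term along the moving frame, and then to pass to the limit term by term, showing that everything except this top-order term carries a factor $(r+t\eps^{-2})^{-k}$ with $k\ge 1$ and therefore disappears in the distributional limit. Viewing $w^\eps$ as a function of the elliptic variables, \eqref{eq:multiderivative} gives
\begin{equation*}
  \del^{\gamma_1}_{x_1}\del^{\gamma_2}_{x_2} w^\eps
  = \frac{1}{(\sqrt{a_1})^{\gamma_1}(\sqrt{a_2})^{\gamma_2}}
  \Bigl(\cos^{\gamma_1}\!\varphi\,\sin^{\gamma_2}\!\varphi\,\del_r^{\gamma} w^\eps
  + \tilde P^{\gamma_1,\gamma_2}\bigl(\cos\varphi,\sin\varphi,\del_r,\del_\varphi,\tfrac1r\bigr) w^\eps\Bigr).
\end{equation*}
The algebraic heart of the argument is that evaluation at the moving-frame point $(r+t\eps^{-2},\varphi,t\eps^{-2})$ intertwines the elliptic operators with the corresponding operators on $W^\eps$: by the chain rule $(\del_r w^\eps)(r+t\eps^{-2},\varphi,t\eps^{-2}) = \del_r W^\eps(r,\varphi,t)$, the same holds for $\del_\varphi$, and multiplication by $r^{-k}$ becomes multiplication by $(r+t\eps^{-2})^{-k}$; since these intertwining relations are respected by composition, the whole operator $\tilde P^{\gamma_1,\gamma_2}$ transforms accordingly. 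Hence the evaluated left-hand side equals
\begin{equation*}
  \frac{\cos^{\gamma_1}\!\varphi\,\sin^{\gamma_2}\!\varphi}{(\sqrt{a_1})^{\gamma_1}(\sqrt{a_2})^{\gamma_2}}\,\del_r^\gamma W^\eps
  \;+\; \frac{1}{(\sqrt{a_1})^{\gamma_1}(\sqrt{a_2})^{\gamma_2}}\sum_{j} g_j(\varphi)\,(r+t\eps^{-2})^{-k_j}\,\del_r^{p_j}\del_\varphi^{q_j} W^\eps,
\end{equation*}
a finite sum in $j$ with bounded, $\eps$-independent functions $g_j$ and exponents $k_j\ge1$.

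For the leading term I would invoke the continuity of differentiation on $\mathcal{D}'$: the hypothesis $W^\eps\weakto W$ yields $\del_r^\gamma W^\eps\weakto\del_r^\gamma W$, and multiplication by the fixed smooth bounded factor $\cos^{\gamma_1}\!\varphi\,\sin^{\gamma_2}\!\varphi/((\sqrt{a_1})^{\gamma_1}(\sqrt{a_2})^{\gamma_2})$ preserves this convergence. This produces exactly the limit claimed in the lemma, so it remains to show that each remainder monomial $m_\eps\,\del_r^{p}\del_\varphi^{q}W^\eps$, with $m_\eps(r,\varphi,t):=g(\varphi)(r+t\eps^{-2})^{-k}$ and $k\ge1$, tends to zero in $\mathcal{D}'$.

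For this I fix a test function $\phi\in\mathcal{D}(\R\times\R\times(0,T))$ with support in $\{t\in[t_0,t_1],\ |r|\le R\}$, $t_0>0$; note that for small $\eps$ one has $r+t\eps^{-2}>0$ on the support, so the cutoff in \eqref{eq:movinframefct} is inactive and the expression is well defined. Using $\langle m_\eps\,\del_r^{p}\del_\varphi^{q}W^\eps,\phi\rangle = \langle \del_r^{p}\del_\varphi^{q}W^\eps,\, m_\eps\phi\rangle$, it suffices to prove that $m_\eps\phi\to0$ in $\mathcal{D}$, because $\{\del_r^{p}\del_\varphi^{q}W^\eps\}$ is bounded in $\mathcal{D}'$ (a weak-$*$ convergent sequence is bounded, hence equicontinuous on the fixed compact support by Banach--Steinhaus). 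The supports of $m_\eps\phi$ stay inside this compact set, and on it $(r+t\eps^{-2})^{-k'}\le(t_0\eps^{-2}-R)^{-k'}=O(\eps^{2k'})$ for every $k'\ge k\ge1$, so all purely spatial derivatives of $m_\eps\phi$ tend to zero uniformly. The one point that needs genuine care --- and the main obstacle of the proof --- is the behaviour under $\del_t$: differentiating $(r+t\eps^{-2})^{-k'}$ in $t$ creates a factor $\eps^{-2}$ but simultaneously raises the denominator, giving the bound $\eps^{-2}\cdot O(\eps^{2(k'+1)})=O(\eps^{2k'})\to0$; iterating, every mixed derivative of $m_\eps\phi$ still converges to zero uniformly. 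Thus $m_\eps\phi\to0$ in $\mathcal{D}$, whence $\langle m_\eps\,\del_r^{p}\del_\varphi^{q}W^\eps,\phi\rangle\to0$. Summing the finitely many remainder monomials shows that the remainder vanishes in $\mathcal{D}'$, and together with the leading-term limit this establishes the lemma.
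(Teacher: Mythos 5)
Your proposal is correct and follows essentially the same route as the paper: substitute the expansion \eqref{eq:multiderivative}, observe that $\del_r,\del_\varphi$ act on $w^\eps$ in the moving frame as they act on $W^\eps$, identify the leading radial term, and kill the remainder using the $O(\eps^{2k})$ decay of $(r+t\eps^{-2})^{-k}$ on $\{t\ge t_0>0\}$. The only difference is that you spell out, via Banach--Steinhaus equicontinuity and the convergence $m_\eps\phi\to 0$ in $\mathcal{D}$, why a weak-$*$ convergent sequence of distributions paired against test functions tending to zero gives zero --- a detail the paper leaves implicit.
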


\begin{proof}
  We recall the definition $W^\eps(r,\varphi,t) =
  w^\eps\left(r+\tfrac{t}{\eps^2}, \varphi,\tfrac{t}{\eps^2}\right)$.
  The formula for $\del^{\gamma_1}_{x_1}\del^{\gamma_2}_{x_2}$ of
  Remark \ref{rem:derivativeelliptic} provides
  \begin{align}
    \begin{split}
      \label{eq:multiderivative2}
      &\left(\del^{\gamma_1}_{x_1}\del^{\gamma_2}_{x_2} w^\eps\right)
      \left(r+\tfrac{t}{\eps^2},\varphi,\tfrac{t}{\eps^2}\right)=\\
      &\frac{1}{(\sqrt{a_1})^{\gamma_1}(\sqrt{a_2})^{\gamma_2}}
      \left[\cos^{\gamma_1}{(\varphi)}\sin^{\gamma_2}{(\varphi)}\left(\del^{\gamma}_r
          w^\eps\right)\left(r+\tfrac{t}{\eps^2},\varphi,\tfrac{t}{\eps^2}\right)
        + R^\eps(r,\varphi,t)\right],
    \end{split}
  \end{align}
  where $R^\eps(r,\varphi,t)$ is of the form
  \begin{align*}
    R^\eps(r,\varphi,t)&:= \sum_{k=1}^{\gamma}\left(
      Q^{\gamma_1,\gamma_2}_k(\cos{\varphi},\sin{\varphi},\del_r,\del_\varphi)w^\eps\right)
    \left(r+\tfrac{t}{\eps^2},\varphi,\tfrac{t}{\eps^2}\right)\frac{1}{(r+\tfrac{t}{\eps^2})^k}\\
    &=\sum_{k=1}^{\gamma}
    Q^{\gamma_1,\gamma_2}_k(\cos{\varphi},\sin{\varphi},\del_r,\del_\varphi)W^\eps(r,\varphi,t)
    \left(\frac{\eps^2}{\eps^2r+t}\right)^k.
  \end{align*}
  The last equality holds, since the derivatives $\del_r,\del_\varphi$
  act on $w^\eps$ as they act on $W^\eps$.

  Since $W^\eps$ converges to $W$ in the sense of distributions, also
  all derivatives converge in the distributional sense.  We conclude
  $Q^{\gamma_1,\gamma_2}_k(\cos{\varphi},\sin{\varphi},\del_r,\del_\varphi)
  W^\eps(r,\varphi,t)$ $\weakto
  Q^{\gamma_1,\gamma_2}_k(\cos{\varphi},\sin{\varphi},\del_r,\del_\varphi)W$
  in the distributional sense on $\R\times\R\times(0,T)$ for every
  $k$. In particular, exploiting that $\tfrac{\eps^2}{\eps^2r+t}$ is
  of order $\eps^2$ for every $t>0$, we obtain $R^\eps\weakto 0$ in
  $\mathcal{D}'(\R\times\R\times(0,T))$.

  The same argument implies for the term containing only
  $r$-derivatives
  \begin{align*}
    \left(\del^{\gamma}_r
      w^\eps\right)\left(r+\tfrac{t}{\eps^2},\varphi,\tfrac{t}{\eps^2}\right)
    =\left(\del^{\gamma}_r
      W^\eps\right)\left(r,\varphi,t\right)\weakto\del^{\gamma}_r W
    \quad \text{ in }\ \mathcal{D}'(\R\times\R\times(0,T)).
  \end{align*}
  This allows to pass to the distributional limit in
  \eqref{eq:multiderivative2}, which concludes the proof of the lemma.
\end{proof}

\begin{proof}[Proof of Proposition \ref{prop:movingframe}]  
  We start from the weakly dispersive equation
  \begin{align*}
    \del_t^2 w^\eps - A D^2 w^\eps - \eps^2 \del_t^2 E
    D^2 w^\eps + \eps^2 F D^4 w^\eps = 0.
  \end{align*}
  The general idea of the proof is to transform the above equation
  into the elliptic coordinates of \eqref{eq:ellipticcoord}, to
  rewrite it in terms of $W^\eps(r,\varphi,t) =
  w^\eps\left(r+\tfrac{t}{\eps^2}, \varphi,\tfrac{t}{\eps^2}\right)$,
  and to pass to the distributional limit.


  {\em Step 1: The term $(\del_t^2 w^\eps - A D^2 w^\eps)$:} We start
  with the evaluation of time-derivatives of $w^\eps$, using the chain
  rule on $w^\eps(r,\varphi,t) = W^\eps(r-t, \varphi, t\eps^2)$,
  \begin{align}
    \label{eq:timeder2}
    (\del^2_t w^\eps)(r+\tfrac{t}{\eps^2}, \varphi, \tfrac{t}{\eps^2})
    &= \del^2_r W^\eps(r,\varphi,t) - 2\eps^2\del_t\del_r
    W^\eps(r,\varphi,t) + \eps^4\del^2_t W^\eps(r,\varphi,t).
  \end{align}
  Combining this result with \eqref{eq:der3}, we find
  \begin{align*}
    &(\del_t^2 w^\eps - A D^2 w^\eps)\left(r+\tfrac{t}{\eps^2},
      \varphi, \tfrac{t}{\eps^2}\right)=\\
    &\quad = \del^2_r W^\eps(r,\varphi,t) - 2\eps^2\del_t\del_r
    W^\eps(r,\varphi,t)
    + \eps^4\del^2_t W^\eps(r,\varphi,t)\\
    &\qquad -\left(\del^2_r
      W^\eps(r,\varphi,t)+\frac{1}{r+\tfrac{t}{\eps^2}}\del_r
      W^\eps(r,\varphi,t)
      +\frac{1}{(r+\tfrac{t}{\eps^2})^2}\del^2_{\varphi}W^\eps(r,\varphi,t)\right)\\
    &\quad = -\eps^2\left(2\del_t\del_r  W^\eps(r,\varphi,t)+\frac{1}{\eps^2r+t}\del_r W^\eps(r,\varphi,t)\right)\\
    &\qquad +\eps^4\left(\del^2_t W^\eps(r,\varphi,t) -
      \frac{1}{(\eps^2r+t)^2}\del^2_{\varphi}W^\eps(r,\varphi,t)\right).
  \end{align*}
  We divide by $\eps^2$ and take the distributional limit, exploiting
  that, by assumption, $W^\eps\weakto W$ in
  $\mathcal{D}'(\R\times\R\times(0,T))$. We obtain
  \begin{align}
    \label{eq:convscaled}
    \frac{1}{\eps^2}(\del_t^2 w^\eps - A D^2
    w^\eps)\left(r+\tfrac{t}{\eps^2}, \varphi,
      \tfrac{t}{\eps^2}\right) \weakto -2\del_t\del_r
    W-\frac{1}{t}\del_r W
  \end{align}
  for $\eps\rightarrow 0$ in $\mathcal{D}'(\R\times\R\times(0,T))$. 

  \smallskip {\em Step 2: The term $-\eps^2 \del_t^2 E D^2 w^\eps +
    \eps^2 F D^4 w^\eps$:} In view of the scaling in
  \eqref{eq:convscaled}, we have to analyze the distributional limit
  of $-E D^2\del_t^2 w^\eps + F D^4 w^\eps$.

  We recall that, by our construction of $E$ and $F$, the term can be
  written as $CD^4 w^\eps + R^\eps$ with some remainder $R^\eps$ that
  vanishes in the distributional limit as $\eps\rightarrow 0$. Indeed,
  exploiting the solution property of $w^\eps$ and the decomposition
  \eqref{eq:def-coeff-EF} one obtains
  \begin{align}
    -E D^2 \del_t^2 w^\eps +F D^4 w^\eps=CD^4 w^\eps 
    + \eps^2\left(-ED^2ED^2\del^2_t w^\eps + ED^2FD^4 w^\eps\right).
  \end{align}
  In particular, inserting an argument in scaled variables, 
  \begin{align}
    \begin{split}
      \label{eq:decomposewshift}
      &-(E D^2 \del_t^2 w^\eps)\left(r+\tfrac{t}{\eps^2}, \varphi,
        \tfrac{t}{\eps^2}\right) +(F D^4
      w^\eps)\left(r+\tfrac{t}{\eps^2},
	\varphi, \tfrac{t}{\eps^2}\right)\\
      &\qquad = (CD^4 w^\eps)\left(r+\tfrac{t}{\eps^2}, \varphi,
        \tfrac{t}{\eps^2}\right) + R^\eps,
    \end{split}
  \end{align}
  where the remainder $R^\eps$ is given by
  \begin{align*}
    R^\eps &= \eps^2 \left[-(ED^2ED^2\del^2_t w^\eps)\left(r+\tfrac{t}{\eps^2}, \varphi,
        \tfrac{t}{\eps^2}\right) +(ED^2FD^4 w^\eps)\left(r+\tfrac{t}{\eps^2},
	\varphi, \tfrac{t}{\eps^2}\right)\right]\\
    &= -\eps^2\left(\del^2_r- 2\eps^2\del_t\del_r
      +\eps^4\del^2_t\right)\left((ED^2ED^2w^\eps)\left(r+\tfrac{t}{\eps^2},
	\varphi, \tfrac{t}{\eps^2}\right)\right)\\
    &\quad +\eps^2(ED^2FD^4 w^\eps)\left(r+\tfrac{t}{\eps^2}, \varphi,
      \tfrac{t}{\eps^2}\right).
  \end{align*}
  In the second equality we used relation \eqref{eq:timeder2}. In this
  form we can apply Lemma \ref{lem:auxiliarydistr} to $R^\eps$. We
  obtain that $(ED^2ED^2w^\eps)\left(r+\tfrac{t}{\eps^2}, \varphi,
    \tfrac{t}{\eps^2}\right)$ and $(ED^2FD^4
  w^\eps)\left(r+\tfrac{t}{\eps^2}, \varphi, \tfrac{t}{\eps^2}\right)$
  have distributional limits. In particular, taking into account the
  $\eps^2$-factor in the above formula, we conclude the convergence
  $R^\eps\weakto 0$ in $\mathcal{D}'(\R\times\R\times(0,T))$ as
  $\eps\rightarrow 0$.
	
  Regarding the term $(CD^4 w^\eps)\left(r+\tfrac{t}{\eps^2},\varphi,
    \tfrac{t}{\eps^2}\right)$ in \eqref{eq:decomposewshift} one can
  directly apply Lemma \ref{lem:auxiliarydistr} with $\gamma=4$.
  Using the explicit form of $C$ from \eqref{eq:coefficients2}, we
  find
  \begin{align*}
    (CD^4 w^\eps)\left(r+\tfrac{t}{\eps^2},\varphi,
      \tfrac{t}{\eps^2}\right)\weakto \left(6\beta
      \frac{\cos^2{(\varphi)}}{a_1}\frac{\sin^2{(\varphi)}}{a_2} +
      \alpha_1\frac{\cos^4{(\varphi)}}{a_1^2} +
      \alpha_2\frac{\sin^4{(\varphi)}}{a_2^2}\right) \del_r^4 W
  \end{align*}
  in $\mathcal{D}'(\R\times\R\times(0,T))$, as $\eps\rightarrow 0$. 
  
  \smallskip {\em Step 3: Conclusion.} Summing up the various terms, we
  find that the distribution $W$ satisfies the equation
  \begin{align*}
    -2\del_t\del_r W-\frac{1}{t}\del_r W + \left(6\beta
      \frac{\cos^2{(\varphi)}}{a_1}\frac{\sin^2{(\varphi)}}{a_2} +
      \alpha_1\frac{\cos^4{(\varphi)}}{a_1^2} +
      \alpha_2\frac{\sin^4{(\varphi)}}{a_2^2}\right) \del_r^4 W=0\,.
  \end{align*}
  For $U:=\del_r W$ we obtain the equation
  \begin{align*}
    \del_t U+\frac{1}{2t}U-\frac{1}{2}\left(6\beta
      \frac{\cos^2{(\varphi)}}{a_1}\frac{\sin^2{(\varphi)}}{a_2} +
      \alpha_1\frac{\cos^4{(\varphi)}}{a_1^2} +
      \alpha_2\frac{\sin^4{(\varphi)}}{a_2^2}\right) \del_r^3 U=0\,.
  \end{align*}
  This concludes the proof of Proposition \ref{prop:movingframe}. 
\end{proof}

\section{Calculation of approximate solutions}
\label{sec.Calculation}

In this section we discuss practical aspects of determining the
coefficients in the effective dispersive equation \eqref
{eq:weakly-dispersive}. We present a numerical method and compare
solutions $u^\eps$ of the original wave equation \eqref {eq:eps-wave}
with solutions $w^\eps$ of \eqref {eq:weakly-dispersive}.

\subsection{Cell problems}

The decomposition lemmas \ref{lem:decompose} and \ref
{lem:decomposediag} show that the coefficient tensors $A,E$ and $F$ of
the weakly dispersive effective equation can be calculated from $A$
and $C$, i.e. by the second and fourth derivatives of the Bloch
eigenvalue $\mu_0(k)$ at $k=0$. This fact makes the effective tensors
computable in terms of cell-problems. In the following calculation we
differentiate \eqref{eq:eigenvalue} with respect to $k$ and integrate
in $y$ to obtain formulas for $A$ and $C$. The result will be a
practical algorithm to determine $A$ and $C$. We will also transform
Lemmas \ref{lem:decompose} and \ref {lem:decomposediag} into an
algorithm that can be used to calculate $E$ and $F$.

For a wave parameter $k\in Z = (-1/2, 1/2)^n$ we consider the
Bloch eigenvalue problem \eqref {eq:eigenvalue} for $m=0$, i.e.
\begin{equation}
  \label{eq:eigenvalue-m=0}
  - (\nabla_y + \ri k ) \cdot (a_Y(y) (\nabla_y + \ri k ) \psi_0(y,k) ) 
  = \mu_0(k) \psi_0(y,k),
\end{equation}
where $\mu_0(k) \in \R$ is the smallest eigenvalue for each $k\in Z$.

\begin{remark} \label{rem:k0_analyt} For $k=0$, the eigenvalue is
  $\mu_0(0)=0$ and the eigenfunction $\psi_0(\cdot,0)$ is a constant
  function.  We normalize eigenfunctions so that
  \begin{equation}\label{E:normaliz}
    \langle \psi_0(\cdot,k) \rangle_Y
    := \frac{1}{|Y|} \int_Y \psi_0(y,k) \,dy  = 1\,,
  \end{equation}
  in particular we obtain $\psi_0(\cdot,0) \equiv 1$.  The
  normalization is possible in a neighborhood of $k=0$, since averages
  of the first eigenfunction $\psi_0(\cdot,k)$ do not vanish for small
  $|k|$. The eigenvalue map $k \mapsto \mu_0(k)\in \R$ and the
  eigenfunction map $k\mapsto \psi_0(\cdot,k)\in L^2(Y)$ are analytic,
  see \cite{MR1484944}.
\end{remark}

Due to Remark \ref{rem:k0_analyt}, it is legitimate to determine
derivatives of $\mu_0$ by differentiating the eigenvalue problem
\eqref{eq:eigenvalue-m=0} in $k$.  We use standard multi-index
notation with $\N_0 = \{0,1,2,...\}$: a multi-index $\alpha =
(\alpha_1,...,\alpha_n) \in \N_0^n$ has length $|\alpha| :=
\alpha_1+\alpha_2+...+\alpha_n$ and defines a differential operator
$\del^\alpha$ of order $|\alpha|$ (derivatives $\del_j = \del_{k_j}$
are with respect to $k\in \R^n$), $\del^\alpha := \del_1^{\alpha_1}
\del_2^{\alpha_2} ... \del_n^{\alpha_n}$.  We define, for
$\alpha\in\N_0^n$,
\begin{align*}
  \mu_0^\alpha &:= \del^\alpha\mu_0|_{k=0}\,,\qquad\qquad
  \psi_{0}^\alpha := \del^\alpha\psi_0|_{k=0}\,.
\end{align*}
Differentiating the normalization \eqref{E:normaliz}, we obtain that
the averages of the higher order derivatives vanish,
$$\langle \psi_{0}^\alpha \rangle_Y =0 \ \text{ for all } \alpha \in \N_0^n,\ 
\alpha \neq 0.$$ We additionally define the differential operators 
\begin{align*}
  \mathcal{A}(k) &:= -(\nabla_y+\ri k)\cdot\left(a_Y(y)(\nabla_y+\ri k)\right),\\
  \mathcal{A}^\alpha &:= \del^\alpha\mathcal{A}|_{k=0}\,.
\end{align*}

\begin{lemma}[The operators $\mathcal{A}^\alpha$]
  \label{lem:derivativeA}
  Let $e_j$ denote the $j$-th Euclidean unit vector. The operators
  $\mathcal{A}^\alpha$ with $|\alpha| \le 2$ are
  \begin{align*}
    \mathcal{A}^{0}f&=-\nabla_y\cdot(a_Y\nabla_y f),\\
    \mathcal{A}^{e_j}f
    &=-\ri\left[(a_Y\nabla_y f)\cdot e_j +
      \nabla_y\cdot(a_Y e_j f)\right]\\
    \mathcal{A}^{e_i+e_j}f&=2(a_Y)_{ij}f
  \end{align*}
  for $i,j=1,...,n$, where $f:\R^n\rightarrow\R$ is an arbitrary
  smooth function.  All operators $\mathcal{A}^\alpha$ with $|\alpha|
  \ge 3$ vanish identically.
\end{lemma}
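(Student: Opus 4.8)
The plan is to exploit that the operator $\mathcal{A}(k)$ depends on the parameter $k$ only through the shifted gradient $\nabla_y + \ri k$, which is \emph{affine} in $k$. Since $\mathcal{A}(k)$ is built by composing this shifted gradient with itself, with $a_Y$ inserted in between, the full operator is a polynomial of degree at most two in the components $k_1,\dots,k_n$. Once this polynomial structure is made explicit, every $\mathcal{A}^\alpha = \del^\alpha\mathcal{A}|_{k=0}$ is simply the corresponding Taylor coefficient, and the vanishing of all $\mathcal{A}^\alpha$ with $|\alpha|\ge 3$ is then immediate.

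First I would expand $\mathcal{A}(k)f$ by applying the outer operator to the inner vector field $v := a_Y(\nabla_y + \ri k)f = a_Y\nabla_y f + \ri\, a_Y k f$. Writing $(\nabla_y + \ri k)\cdot v = \nabla_y\cdot v + \ri k\cdot v$ and collecting terms by their homogeneity in $k$ (and using $-\ri\cdot\ri = 1$ in the purely quadratic term), I expect to obtain
\begin{equation*}
  \mathcal{A}(k)f = -\nabla_y\cdot(a_Y\nabla_y f)
  \;-\;\ri\bigl[\nabla_y\cdot(a_Y k f) + k\cdot(a_Y\nabla_y f)\bigr]
  \;+\;(k\cdot a_Y k)\,f\,,
\end{equation*}
where the three groups are homogeneous of degree $0$, $1$, and $2$ in $k$, respectively. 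Reading off the degree-zero part already gives $\mathcal{A}^0 f = -\nabla_y\cdot(a_Y\nabla_y f)$.

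Next I would differentiate term by term. Applying $\del_{k_j}$ and setting $k=0$ annihilates the degree-zero and degree-two parts and, via $\del_{k_j}(a_Y k)=a_Y e_j$ and $\del_{k_j}(k\cdot w)=w\cdot e_j$, isolates the linear part to yield $\mathcal{A}^{e_j}f = -\ri[(a_Y\nabla_y f)\cdot e_j + \nabla_y\cdot(a_Y e_j f)]$. For the second derivatives only the quadratic part $(k\cdot a_Y k)f = \sum_{l,m} k_l (a_Y)_{lm} k_m\, f$ survives; here I would use the symmetry $(a_Y)_{lm}=(a_Y)_{ml}$ to compute $\del_{k_i}\del_{k_j}(k\cdot a_Y k)=2(a_Y)_{ij}$, giving $\mathcal{A}^{e_i+e_j}f = 2(a_Y)_{ij}\,f$. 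Finally, since the expansion terminates at degree two, any $\del^\alpha$ with $|\alpha|\ge 3$ kills all three terms, so $\mathcal{A}^\alpha\equiv 0$ for $|\alpha|\ge 3$.

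The computation is elementary and carries no genuine obstacle; the only points requiring care are the bookkeeping of the vector/matrix structure (keeping the contraction $a_Y k$ distinct from $a_Y\nabla_y f$, and the two divergence-type contractions $\nabla_y\cdot$ and $\ri k\cdot$ arising from the outer operator) and the use of the symmetry of $a_Y$ to merge the two contributions of the quadratic term into the single factor $2(a_Y)_{ij}$. These are exactly the places where a sign or a missing factor of $2$ could slip in, so I would verify the degree-one and degree-two coefficients against the claimed formulas explicitly before concluding.
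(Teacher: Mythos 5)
Your proposal is correct and follows essentially the same route as the paper: both amount to observing that $\mathcal{A}(k)$ is a quadratic polynomial in $k$ and computing its first and second $k$-derivatives at $k=0$ (the paper differentiates the composite operator directly with the product rule, while you first expand into homogeneous parts, but the computation and the use of the symmetry of $a_Y$ for the factor $2(a_Y)_{ij}$ are identical). No gaps.
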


\begin{proof}
  The formula for $\mathcal{A}^{0}$ is obtained from the definition of
  $\mathcal{A}(k)$ by setting $k=0$. Concerning the first order
  derivatives of $\mathcal{A}(k)$ we calculate
  \begin{align*}
    \del_j\mathcal{A}(k)f&=-\ri e_j\cdot(a_Y(\nabla_y+\ri k)f)
    -\ri(\nabla_y+\ri k)\cdot\left(a_Y e_j f \right)\\
    &=-\ri\left[e_j\cdot (a_Y(\nabla_y+\ri k)f)
      + (\nabla_y+\ri k)\cdot(a_Y e_j f)\right]\,.
  \end{align*}
  Inserting $k=0$ provides the claim about $\mathcal{A}^{e_j}$.  For
  the second order derivatives of $\mathcal{A}(k)$ we calculate
  \begin{align*}
    \del_i \del_j \mathcal{A}(k)f = -\ri^2[e_j \cdot (a_Y e_i f) 
    + e_i\cdot(a_Y e_j f)] = ((a_Y)_{ji}+(a_Y)_{ij})f =
    2(a_Y)_{ij}f,
  \end{align*}
  where the last equality holds due to the symmetry of $a_Y$.
\end{proof}

We next want to obtain equations that characterize the functions
$\psi_0^{\alpha}$. In order to calculate derivatives of products, we
will use the Leibniz formula in the following form: For every
multi-index $\alpha\in\N_0^n$ and sufficiently smooth functions $f, g
: \R^n\to\C$ there holds
\begin{align*}
  \del^\alpha(fg)=\sum_{\beta\in\N_0^n} {\alpha\choose\beta}\del^\beta
  f\ \del^{\alpha-\beta}g\,.
\end{align*}
We use the binomial coefficient ${\alpha\choose\beta} :=
{\alpha_1\choose\beta_1}{\alpha_2\choose\beta_2} \hdots
{\alpha_n\choose\beta_n}$, the (partial) ordering $\beta\le \alpha
:\Leftrightarrow \beta_i\leq\alpha_i$ for every $i\in\{1,...,n\}$, and
note that ${\alpha\choose\beta} \neq 0$ is non-vanishing only for
$\beta \le \alpha$. In particular, in the following calculations, all
sums over multi-indices are finite sums.  Summation of multi-indices
is performed in the standard way as $\alpha\pm\beta :=
(\alpha_1\pm\beta_1, \hdots ,\alpha_n\pm\beta_n)$.

With the operator $\mathcal{A}(k)$, we can write equation
\eqref{eq:eigenvalue-m=0} as $\mathcal{A}(k) \psi_0(y,k) = \mu_0(k)
\psi_0(y,k)$. Taking partial derivatives with respect to $k$ with the
Leibniz formula, we find the following result.

\begin{lemma}[Cell Problems for $\psi_0^\alpha$]
  \label{prop:cellprob}
  Let $\alpha\in\N_0^n$ be a multi-index. Then the function
  $\psi_0^{\alpha}$ satisfies the relation
  \begin{align*}
    \mathcal{A}^0\psi_0^\alpha
    +\sum_{j=1}^n\alpha_j\mathcal{A}^{e_j}\psi_0^{\alpha-e_j}
    +\sum_{i\le j=1}^n {\alpha\choose e_i + e_j} \mathcal{A}^{e_i+e_j}
    \psi_0^{\alpha-e_i-e_j} = \sum_{\beta\in\N_0^n}
    {\alpha\choose\beta} \mu_0^\beta\psi_0^{\alpha-\beta}\,.
  \end{align*}
  Inserting the operators from Lemma \ref {lem:derivativeA} and using
  $\mu_0^0 = \mu_0|_{k=0} = 0$, this equation reads
  \begin{align}
    \begin{split}
      \label{eq:cell1}
      -\nabla_y\cdot\left[a_Y\nabla_y\psi_0^\alpha\right]
      &=\ri\sum_{j=1}^n\alpha_j\left[(a_Y
        \nabla_y\psi_0^{\alpha-e_j})\cdot e_j
	+\nabla_y\cdot\left(\psi_0^{\alpha-e_j}a_Ye_j\right)\right]\\
      &\quad -2 \sum_{i\le j=1}^n {\alpha\choose e_i + e_j} (a_Y)_{ij}
      \psi_0^{\alpha-e_i-e_j} + \sum_{0\neq\beta\in \N_0^n}
      {\alpha\choose\beta} \mu_0^\beta\psi_0^{\alpha-\beta}.
    \end{split}
  \end{align}
\end{lemma}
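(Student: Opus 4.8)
The plan is to differentiate the eigenvalue identity $\mathcal{A}(k)\psi_0(\cdot,k) = \mu_0(k)\psi_0(\cdot,k)$ with the operator $\del^\alpha$ and then to evaluate at $k=0$. Both sides are products --- on the left a $k$-dependent operator acting on the $k$-dependent eigenfunction, on the right a scalar multiplying that eigenfunction --- so the natural tool is the multi-index Leibniz formula recalled above. The entire argument then reduces to determining which derivatives of $\mathcal{A}(k)$ and $\mu_0(k)$ survive in the limit $k\to 0$.

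I would first treat the left-hand side. The Leibniz formula gives
\[
  \del^\alpha\bigl(\mathcal{A}(k)\psi_0\bigr)
  = \sum_{\beta\le\alpha}{\alpha\choose\beta}
  \bigl(\del^\beta\mathcal{A}(k)\bigr)\bigl(\del^{\alpha-\beta}\psi_0\bigr)\,.
\]
Evaluating at $k=0$ and using Lemma \ref{lem:derivativeA}, every term with $|\beta|\ge 3$ vanishes since $\mathcal{A}^\beta=0$ there; hence only $|\beta|\in\{0,1,2\}$ contribute. The case $|\beta|=0$ yields $\mathcal{A}^0\psi_0^\alpha$; the cases $|\beta|=1$ are $\beta=e_j$ with ${\alpha\choose e_j}=\alpha_j$, giving $\sum_{j}\alpha_j\mathcal{A}^{e_j}\psi_0^{\alpha-e_j}$; the cases $|\beta|=2$ are indexed by $e_i+e_j$ and produce the third sum. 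This is exactly the left-hand side of the first displayed identity in the lemma.

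For the right-hand side, applying Leibniz to the product $\mu_0(k)\psi_0$ and setting $k=0$ immediately gives $\sum_\beta{\alpha\choose\beta}\mu_0^\beta\psi_0^{\alpha-\beta}$. Equating the two sides establishes the first equation. The explicit form \eqref{eq:cell1} is then obtained by inserting the formulas for $\mathcal{A}^0$, $\mathcal{A}^{e_j}$ and $\mathcal{A}^{e_i+e_j}$ from Lemma \ref{lem:derivativeA}, by keeping $\mathcal{A}^0\psi_0^\alpha=-\nabla_y\cdot(a_Y\nabla_y\psi_0^\alpha)$ on the left while moving the first- and second-order operator terms to the right, and finally by discarding the $\beta=0$ summand on the right because $\mu_0^0=0$.

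The only genuinely delicate point --- and the step I would be most careful about --- is the combinatorics of the second-order contribution. Because $\mathcal{A}^{e_i+e_j}=\mathcal{A}^{e_j+e_i}=2(a_Y)_{ij}$ is symmetric in $i$ and $j$, the multi-indices $\beta$ with $|\beta|=2$ must be enumerated without double counting. Writing the sum as $\sum_{i\le j}$ with the convention $e_i+e_i=2e_i$ treats the off-diagonal pairs (where ${\alpha\choose e_i+e_j}=\alpha_i\alpha_j$) and the diagonal indices $\beta=2e_i$ (where ${\alpha\choose 2e_i}={\alpha_i\choose 2}$) in a single uniform sum, matching the coefficients in the statement. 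All remaining steps are direct substitutions.
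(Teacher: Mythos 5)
Your proof is correct and follows exactly the route the paper intends: the lemma is stated immediately after the remark that one applies $\del^\alpha$ to $\mathcal{A}(k)\psi_0(\cdot,k)=\mu_0(k)\psi_0(\cdot,k)$ via the Leibniz formula and evaluates at $k=0$, using that $\mathcal{A}^\beta=0$ for $|\beta|\ge 3$. Your careful handling of the $|\beta|=2$ combinatorics (the $\sum_{i\le j}$ enumeration with ${\alpha\choose 2e_i}={\alpha_i\choose 2}$ on the diagonal and ${\alpha\choose e_i+e_j}=\alpha_i\alpha_j$ off the diagonal) is precisely the detail the paper leaves implicit, and it is right.
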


In our next step we obtain formulas for $\mu_0^\alpha$ in terms of
$\psi_0^{\beta}$ and $\mu_0^{\beta}$ with $\beta<\alpha$.

\begin{lemma}[Formulas for $\mu_0^\alpha$]\label{lem:mu_der}
  Let $\alpha\in\N_0^n$ be a multi-index. In the case $\alpha = 0$ we
  have $\mu_0^\alpha = \mu_0^{0} = 0$. For $|\alpha|\geq 1$ there
  holds
  \begin{align}
    \mu_0^\alpha &= 0 \qquad \text{if } |\alpha| \text{ is odd,}\\
    \mu_0^\alpha &= 2 \sum_{i\le j=1}^n {\alpha\choose e_i + e_j}
    \left\la (a_Y)_{ij}\psi_0^{\alpha-e_i-e_j}\right\ra_Y \notag -
    \ri\sum_{j=1}^n\alpha_j\left\la a_Y
      \nabla_y\psi_0^{\alpha-e_j}\right\ra_Y\cdot e_j\label{eq:mu-der}\\
    & \hspace{8.5cm} \text{if } |\alpha| \text{ is even.}
  \end{align}
\end{lemma}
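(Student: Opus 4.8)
The plan is to split the argument according to the parity of $|\alpha|$. For odd $|\alpha|$ there is nothing to compute: the reflection symmetry $\mu_0(k)=\mu_0(-k)$ recorded before \eqref{eq:Taylor-mu} makes $\mu_0$ an even function of $k$, so that all odd-order derivatives vanish at $k=0$, giving $\mu_0^\alpha=0$. The case $\alpha=0$ is immediate from $\mu_0^0=\mu_0(0)=0$ (Remark \ref{rem:k0_analyt}). Thus the only substantial claim is formula \eqref{eq:mu-der} for even $|\alpha|$.

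For the even case, the key step is to average the cell-problem identity of Lemma \ref{prop:cellprob} over the cell $Y$, i.e. to apply $\langle\cdot\rangle_Y$ to \eqref{eq:cell1}. The point is that averaging is exactly the Fredholm solvability condition for the self-adjoint operator $\mathcal{A}^0=-\nabla_y\cdot(a_Y\nabla_y\cdot)$ on $H^1_\per(Y)$: testing against the constant function, which spans $\ker\mathcal{A}^0$, annihilates the highest-order (and hence unknown) corrector $\psi_0^\alpha$ and leaves an explicit expression in the lower-order correctors. Concretely, I would evaluate the three terms on the left as follows. First, $\langle\mathcal{A}^0\psi_0^\alpha\rangle_Y=-\langle\nabla_y\cdot(a_Y\nabla_y\psi_0^\alpha)\rangle_Y=0$, since the integrand is the divergence of a $Y$-periodic field. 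Second, using Lemma \ref{lem:derivativeA}, $\langle\mathcal{A}^{e_j}\psi_0^{\alpha-e_j}\rangle_Y=-\ri\langle a_Y\nabla_y\psi_0^{\alpha-e_j}\rangle_Y\cdot e_j$, because the remaining piece $\nabla_y\cdot(a_Y e_j\psi_0^{\alpha-e_j})$ is again a divergence and averages to zero. Third, $\langle\mathcal{A}^{e_i+e_j}\psi_0^{\alpha-e_i-e_j}\rangle_Y=2\langle(a_Y)_{ij}\psi_0^{\alpha-e_i-e_j}\rangle_Y$ directly.

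On the right-hand side of \eqref{eq:cell1}, I would use the normalization consequence $\langle\psi_0^\gamma\rangle_Y=0$ for $\gamma\neq0$ together with $\psi_0^0\equiv1$: in $\sum_\beta\binom{\alpha}{\beta}\mu_0^\beta\langle\psi_0^{\alpha-\beta}\rangle_Y$ every term with $\beta\neq\alpha$ drops out, leaving only $\binom{\alpha}{\alpha}\mu_0^\alpha=\mu_0^\alpha$. Collecting the surviving contributions and solving for $\mu_0^\alpha$ then yields \eqref{eq:mu-der} verbatim. The main thing to be careful about is this collapse of the right-hand sum, which is precisely what isolates $\mu_0^\alpha$; the rest is bookkeeping of the binomial coefficients coming from the Leibniz expansion. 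I would note in passing that the same averaged identity in fact holds for odd $|\alpha|$ as well, where both sides then vanish, which provides a consistency check with the symmetry argument.
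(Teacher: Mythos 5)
Your proposal is correct and follows essentially the same route as the paper: the odd case and $\alpha=0$ are dispatched by the symmetry $\mu_0(k)=\mu_0(-k)$ and Remark \ref{rem:k0_analyt}, and the even case is obtained by averaging \eqref{eq:cell1} over $Y$, using periodicity to kill the divergence-form terms and the normalization $\langle\psi_0^0\rangle_Y=1$, $\langle\psi_0^\gamma\rangle_Y=0$ for $\gamma\neq 0$ to collapse the right-hand sum to $\mu_0^\alpha$. The Fredholm-alternative framing is a nice way to see why the unknown corrector $\psi_0^\alpha$ drops out, but the computation is identical to the paper's.
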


\begin{proof}
  The statement for $\mu_0^0$ has already been observed, cf. Remark
  \ref{rem:k0_analyt}. For odd $|\alpha|$, the symmetry
  $\mu_0(-k)=\mu(k)$ for all $k\in Z$ implies $\mu_0^\alpha=0$
  (compare e.g. Remark 2.7 in \cite{Bloch-DLS-2013}).  For even
  $|\alpha|$, the formula is a consequence of relation
  \eqref{eq:cell1}. Indeed, we can reorganize \eqref{eq:cell1},
  writing the term with $\beta = \alpha$ in the last sum explicitely,
  and find
  \begin{align*}
    \mu_0^\alpha\psi_0^0=
    &-\nabla_y\cdot\left[a_Y\nabla_y\psi_0^\alpha\right]
    -\ri\sum_{j=1}^n\alpha_j\left[(a_Y
      \nabla_y\psi_0^{\alpha-e_j})\cdot e_j
      +\nabla_y\cdot\left(\psi_0^{\alpha-e_j}a_Ye_j\right)\right]\\
    &+ 2 \sum_{i\le j=1}^n {\alpha\choose e_i + e_j}
    (a_Y)_{ij}\psi_0^{\alpha-e_i-e_j} - \sum_{1\leq |\beta|\leq
      |\alpha|-1} {\alpha\choose\beta}
    \mu_0^\beta\psi_0^{\alpha-\beta}.
  \end{align*}

  We integrate this relation over the periodicity cell $Y$, exploiting
  $\langle\psi_0^0\rangle_Y = 1$ and $\langle \psi_{0}^\alpha
  \rangle_Y = 0$ for all $\alpha \neq 0$. Furthermore, we use that the
  integral over the two terms in divergence form vanishes by
  periodicity, and obtain \eqref {eq:mu-der}.
\end{proof}

The above formulas allow to calculate all unknowns $\psi_0^{\alpha}$
and $\mu_0^\alpha$ in a recursive scheme. From Lemmas
\ref{prop:cellprob} and \ref{lem:mu_der}, we extract the following
algorithm for the computation of the tensors $A$ and $C$.

\begin{algorithm}(Computation of $A$ and $C$)\label{A:AC} The tensors
  $A$ and $C$ can be computed in five steps.
  \begin{enumerate}
  \item For $j \in \{1,\dots,n\}$, solve \eqref {eq:cell1} for $\alpha
    = e_j$, i.e.
    $$-\nabla_y\cdot [a_Y \nabla_y \psi_0^{e_j}] = \ri \nabla_y\cdot(a_Y e_j)\,.$$
  \item For $i,j \in \{1,\dots,n\}$, evaluate \eqref {eq:mu-der} for
    $\alpha = e_i + e_j$, i.e.
    $$A_{ij} = \frac{1}{2}\mu_0^{e_i+e_j} 
    = \left\la (a_Y)_{ij} - \frac{\ri}{2} \left((a_Y\nabla_y
        \psi_0^{e_j})\cdot e_i +(a_Y\nabla_y \psi_0^{e_i})\cdot e_j
      \right)\right\ra_Y\,.$$
  \item For $i,j \in \{1,\dots,n\}$, solve \eqref {eq:cell1} for
    $\alpha = e_i + e_j$, i.e.
    \begin{align*}
      -\nabla_y\cdot [a_Y \nabla \psi_0^{e_i+e_j}]
      =& \ri \left[(a_Y\nabla_y \psi_0^{e_j})\cdot e_i 
        +(a_Y\nabla_y \psi_0^{e_i})\cdot e_j \right. \\
      & \left. \quad + \nabla_y\cdot (\psi_0^{e_j}a_Ye_i)+ \nabla_y\cdot
        (\psi_0^{e_i}a_Ye_j)\right] -2(a_Y)_{ij} + \mu_0^{e_i+e_j}\,.
    \end{align*}
  \item For $i,j,k \in \{1,\dots,n\}$, solve \eqref {eq:cell1} for
    $\alpha = e_i + e_j + e_k$,
    \begin{align*}
      &    -\nabla_y\cdot [a_Y \nabla_y \psi_0^{e_i+e_j+e_k}] =\\
      &\quad = \ri \left[(a_Y \nabla_y\psi_0^{e_j+e_k})\cdot e_i
        +(a_Y \nabla_y\psi_0^{e_i+e_k})\cdot e_j +(a_Y \nabla_y\psi_0^{e_i+e_j})\cdot e_k \right.\\
      &\qquad\quad \left. + \nabla_y\cdot(\psi_0^{e_j+e_k}a_Ye_i)
        +\nabla_y\cdot(\psi_0^{e_i+e_k}a_Y e_j)
        +\nabla_y\cdot(\psi_0^{e_i+e_j}a_Ye_k)\right]\\
      &\qquad -2\left((a_Y)_{ij}\psi_0^{e_k}
        +(a_Y)_{ik}\psi_0^{e_j}+(a_Y)_{jk}\psi_0^{e_i}\right) \\
      &\qquad + \mu_0^{e_i+e_j}\psi_0^{e_k}+
      \mu_0^{e_i+e_k}\psi_0^{e_j}+ \mu_0^{e_j+e_k}\psi_0^{e_i}\,.
    \end{align*}
  \item For $i,j,k,l\in \{1,\dots,n\}$, evaluate \eqref {eq:mu-der}
    for $\alpha = e_i + e_j + e_k +e_l$ to find
    \begin{align*}
      C_{ijkl} &= \frac{1}{24}\mu_0^{e_i+e_j+e_k+e_l}\\
      &= \frac{1}{24} \Biggl\langle 2\left((a_Y)_{ij}\psi_0^{e_k+e_l}
        +(a_Y)_{ik}\psi_0^{e_j+e_l}+(a_Y)_{il}\psi_0^{e_j+e_k}\right.\\
      &\qquad\quad \left.+(a_Y)_{jk}\psi_0^{e_i+e_l}
        +(a_Y)_{jl}\psi_0^{e_i+e_k}+(a_Y)_{kl}\psi_0^{e_i+e_j}\right) \\
      &\qquad - \ri\left((a_Y\nabla_y\psi_0^{e_j+e_k+e_l})\cdot e_i
        +(a_Y\nabla_y\psi_0^{e_i+e_k+e_l})\cdot e_j \right.\\
      &\qquad\quad \left. +(a_Y\nabla_y\psi_0^{e_i+e_j+e_l})\cdot
        e_k+(a_Y\nabla_y\psi_0^{e_i+e_j+e_k})\cdot
        e_l\right)\Biggr\rangle_Y\,.
    \end{align*}
  \end{enumerate}
  The elliptic problems in Steps 1,3, and 4 are posed on $Y$ with
  periodic boundary conditions and with the condition of zero mean,
  i.e.
  $$\langle \psi_0^{e_j} \rangle_Y = \langle\psi_0^{e_i+e_j} \rangle_Y 
  =  \langle\psi_0^{e_i+e_j+e_k}\rangle_Y = 0.$$
\end{algorithm}

In the above algorithm we have used the fact that $\mu_0^{\alpha} = 0$
for all $\alpha$ with $|\alpha|=1$ and $|\alpha|=3$. The above
cell-problems are complex valued, but the resulting tensors $A$ and
$C$ are real.  The fact that values of $\psi_0^\alpha$ and
$\mu_0^\alpha$ do not change upon permutations of the entries
$\alpha_1, \alpha_2,\dots,\alpha_n$, allows to reduce the number of
problems to be solved in Steps 3 and 4: The relevant number is ${{n+1}
  \choose{2}}$ and ${{n+2} \choose{3}}$ rather than $n^2$ and $n^3$.
Moreover, spatial symmetries of $a_Y(.)$ can reduce the number of
problems further: If $a_Y(.)$ is even in both variables, $a_Y(y_1,y_2)
= a_Y(-y_1,y_2) = a_Y(y_1,-y_2)$ for all $y\in Y$, then all
derivatives of $\mu_0$ at $k=0$ involving an odd number of derivatives
in one direction vanish, and only $A_{ii}, C_{iiii}$, and $C_{iijj} =
C_{ijij} = C_{jiij}$ for $i,j\in \{1,\dots,n\}$ are potentially
nonzero.

\medskip At this point, we have presented a scheme to compute the
effective tensors $A$ and $C$ with the help of a sequence of
cell-problems. We conclude this section with the outline of an
algorithm (based on the proofs of Lemmas \ref{lem:decompose} and \ref
{lem:decomposediag}) that provides formulas for the effective
coefficient tensors $E$ and $F$.

\begin{algorithm}(Computation of $E$ and $F$)\label{A:EF} The tensors
  $E$ and $F$ can be computed in four steps.
  \begin{enumerate}
  \item Determine $S\in SO(n)$ such that $A = S^T\tilde A S$ with
    $\tilde A=\mathrm{diag}( a_1, a_2,..., a_n)$. Define $\tilde C$ by
    \eqref {eq:CtildeC}.
  \item Loop over all indices $1\le \alpha, \beta, \gamma, \delta\le
    n$ such that no two indices coincide (an empty set in dimension
    $n\le 3$). Use \eqref {eq:no-two-coincide} to compile $\hat F$ and
    set $\bar C := C - \hat F$.
  \item Loop over all remaining indices $1\le \alpha, \beta, \gamma,
    \delta\le n$. Corresponding to $\tilde A$ and $\bar C$, compile
    $\bar E$ and $\bar F$ from the explicit formulas of Cases 1-3 in
    the proof of Lemma \ref {lem:decomposediag}.
  \item Set $\tilde E := \bar E$ and $\tilde F = \bar F + \hat F$.
    Obtain $E := S^T \tilde E S$ and $F$ from \eqref
    {eq:invert-rotation}.
  \end{enumerate}
\end{algorithm}

\subsection{Numerical results in 2D}
\label{sec.numerics}

In the following, we present numerical results for all the three parts
of the homogenization problem: The computation of the original
$\eps$-problem, the computation of effective coefficients with the
help of cell-problems, and the computation of the weakly dispersive
effective problem. The methods vary, we use finite element schemes and
finite difference schemes, see e.g.\,\cite {Quarteroni2007}. We refer
also to \cite {MR3090137} for a recent analysis of numerical methods
and further references. Our results show an excellent agreement
between solutions $u^\eps$ of \eqref {eq:eps-wave} and solutions
$w^\eps$ of \eqref {eq:weakly-dispersive}.

The initial conditions for all the tests below are
\begin{equation}
\label{E:IC_tests}
u^\eps(x,0) = w^\eps(x,0) = e^{-4(x_1^2+x_2^2)}, 
\quad \partial_t u^\eps(x,0) = \partial_t w^\eps(x,0) =0.
\end{equation}
The periodicity cell is $Y=[-\pi,\pi]^2$ in the $y-$variables and
$\eps Y$ in the $x-$variables. All the tests are carried out for the
case of even symmetry in $a_Y$,
i.e. $a_Y(-y_1,y_2)=a_Y(y_1,-y_2)=a_Y(y_1,y_2)$ for all $y\in Y$, so
that for the even initial data in \eqref{E:IC_tests} problems
\eqref{eq:eps-wave} and \eqref{eq:weakly-dispersive} can be reduced to
one quadrant with homogeneous Neumann boundary conditions along the
coordinate axes $x_1=0, x_2=0$. The computational domain $\Omega$ is
rectangular with two sides coinciding with the negative $x_1$ and
$x_2$ axes. At the remaining two sides of the rectangle we use
homogeneous Dirichlet boundary conditions. The size of $\Omega$ is
chosen so that the solution remains localized in $\Omega$ until the
final computation time. The function $a_Y$ is chosen piecewise
constant in all tests; the observed error convergence agrees with
\eqref {eq:approx-result} although Theorem \ref {thm:main} treats only
differentiable fields $a_Y$.

The even symmetry of $a_Y$ in both $y_1$ and $y_2$ causes that $A$ is
diagonal and $C$ has only eight nonzero entries, see \eqref
{eq:coefficients1}--\eqref {eq:coefficients2}:
\begin{align*}
&A_{ii} =: a_i\,, \qquad C_{iiii} =: \alpha_i\,, \qquad i\in\{1,2\}\,,\\
&C_{1122} = C_{2211} = C_{1212} = C_{2121} = C_{1221} = C_{2112} =: \beta\,,
\end{align*}
with all other entries of $A$ and $C$ zero.  A choice of $E$ and $F$
according to Algorithm \ref {A:EF} is
\begin{align}
  \begin{split}\label{E:EF_sym}
    E_{11} &= \frac{\{-\alpha_1\}_+}{a_1}+3\frac{\{-\beta\}_+}{a_2}, \ 
    E_{22} = \frac{\{-\alpha_2\}_+}{a_2}+3\frac{\{-\beta\}_+}{a_1}, \ E_{12} = E_{21} =0,\\
    F_{1111} &= \{\alpha_1\}_+ + 3\frac{a_1}{a_2}\{-\beta\}_+, \ 
    F_{2222} = \{\alpha_2\}_+ + 3\frac{a_2}{a_1}\{-\beta\}_+,\\
    F_{2121} &= \frac{a_1}{a_2}\{-\alpha_2\}_+ + 3\{\beta\}_+, \ 
    F_{1212} = \frac{a_2}{a_1}\{-\alpha_1\}_+ + 3\{\beta\}_+\\
  \end{split}
\end{align}
with all other entries of $F$ being zero. We use these tensors in the
numerical tests below. Note that $\alpha_i\le 0$ holds for $i\in
\{1,2\}$, hence $\{-\alpha_i\}_+ = -\alpha_i$ and $\{\alpha_i\}_+ =
0$.

\subsubsection*{Numerical method}

The values $a_j, \alpha_j$, and $\beta$ for $j=1,2$ are computed via
Algorithm \ref{A:AC}, where the elliptic equations in Steps 1, 3, and
4 are discretized by linear finite elements using the PDE-Toolbox of
Matlab. The periodic boundary conditions are implemented by modifying
the stiffness matrix and the load vector corresponding to homogeneous
Neumann boundary conditions. In all tests a uniform discretization
conforming to the material geometry is generated with the Matlab
function \texttt{poimesh}, i.e. the elements are all right angled,
have equal size and the discontinuity lines of $a_Y$ intersect no
elements. The element size is given by specifying the spacings $h_1$
and $h_2$, i.e. the lengths of the triangle legs in \texttt{poimesh}.

The original wave equation \eqref{eq:eps-wave} is discretized in space
also via linear finite elements using the PDE-Toolbox of Matlab with
uniform elements (generated by \texttt{poimesh}) conforming to the
geometry. The values of $h_1$ and $h_2$ are given for each example
below.

For the weakly dispersive problem \eqref{eq:weakly-dispersive}, which
has constant coefficients, we use the fourth order centered finite
difference discretization in space for both the second order and
fourth order derivatives. In all tests we use the spacing $dx_1 = dx_2
= 0.2$ for \eqref{eq:weakly-dispersive}.

The time discretization of both \eqref{eq:eps-wave} and
\eqref{eq:weakly-dispersive} is done via the second order Leap-Frog
method in two step formulation, e.g. for \eqref{eq:eps-wave} the
semi-discrete problem is thus
$$u^{(n+1)} = 2u^{(n)} -u^{(n-1)} +(dt)^2 \nabla \cdot (a^\eps\nabla u^{(n)}),$$
where $u^{(n)} \approx u^\eps(t = n\cdot dt)$. For the initialization
of the scheme we use the second order Taylor expansion
$$u^{(1)} = u^{(0)} + (dt)^2 \nabla \cdot (a^\eps\nabla u^{(0)}),$$
according to the initial condition $\partial_t u^\eps(x,0)=0.$ For
\eqref{eq:eps-wave} we use the time step $dt = \min \{0.01,
h_1/4,h_2/4\}$, for \eqref{eq:weakly-dispersive} we use $dt = 0.02$.

\subsubsection*{$\eps$-Convergence of the error}
\label{ssec.eps-conv}
Here, our aim is to determine experimentally the convergence rate of
the approximation error $\|u^\eps(t)-w^\eps(t)\|_{L^2(\Omega)}$.  To
this end we fix a periodic coefficient matrix field $a_Y$.  With the
identity matrix $I := \id_{\R^2}$ we set $a_Y(y) = \tilde a_Y(y)I$,
where $\tilde a_Y$ is defined through a rectangular geometry,
\begin{align}\label{E:a_rect}
  \tilde a_Y(y) = \frac{1}{2}+b(y)-\frac1{|Y|} \int_{Y}b(y) dy, \quad
  b(y) = \begin{cases} 1.6 \quad \text{for} \ 
    y\in [-\tfrac{11\pi}{13},\tfrac{11\pi}{13}]\times [-\tfrac{\pi}{3},\tfrac{\pi}{3}], \\
    0.2 \quad \text{otherwise},
\end{cases}
\end{align}
which is illustrated in Fig.~\ref{F:rect_compare} (a).

Our best numerical approximation of the effective coefficients is
$$a_1 \approx  0.281, \ a_2 \approx  0.179, \ \alpha_1 \approx -0.273,  \ 
\alpha_2 \approx -0.044, \ \beta \approx 0.024.$$ These values have
been computed with $h_1=2\pi/208$ and $h_2=2\pi/192$. Within rounding
to three decimal places the values do not change with a further mesh
refinement.

We solve \eqref{eq:eps-wave} and \eqref{eq:weakly-dispersive} for the
values $\eps = 0.2, 0.17, 0.15, 0.12, 0.1, 0.07$ up to times $t=12.5,
17.3, 22.5, 35, 50, 100 \approx \eps^{-2}/2$, respectively.  To keep
the computational expense within limits, we use the discretization
$h_1=2\pi\eps/13, h_2=2\pi\eps/12$ in the simulations of
\eqref{eq:eps-wave}. Using the same number of uniform elements in the
cell problems as in each periodic cell in \eqref{eq:eps-wave},
i.e. discretizing $Y$ by $2\times 13\times 12=312$ uniform elements,
we obtain the values
\begin{equation}\label{E:coeffs_rect}
  a_1  \approx  0.2784, \ a_2  \approx  0.1506, \  \alpha_1 \approx -0.369, \ 
  \alpha_2 \approx -0.034,  \ \beta \approx 0.032.
\end{equation}
We calculate solutions to the weakly dispersive equation
\eqref{eq:weakly-dispersive} with the values \eqref{E:coeffs_rect}
rather than with the converged coefficient values; this is justified
by the fact that \eqref{E:coeffs_rect} are the effective coefficients
for the particular discretization. Formulas \eqref{E:EF_sym} and the
values in \eqref{E:coeffs_rect} produce the effective coefficients
$$E_{11} = 1.3256, \  E_{22} =  0.2257, \ F_{1111}= F_{2222}= 0, \ 
F_{2121}= 0.1588, \ F_{1212}= 0.2957.$$ We can now investigate the
convergence of the error in $\eps$: The $L^2(\Omega)$-error
$\|u^\eps(t)-w^\eps(t)\|_{L^2(\Omega)}$ at the final time
$t=\eps^{-2}/2$ is plotted in Fig.~\ref{F:eps_conv}.  As
Fig.~\ref{F:eps_conv} (b) shows, the convergence is close to linear in
accordance with estimate \eqref{eq:approx-result}.
\begin{figure}[h!]
  \begin{center}
    \includegraphics[width=0.45\columnwidth]{./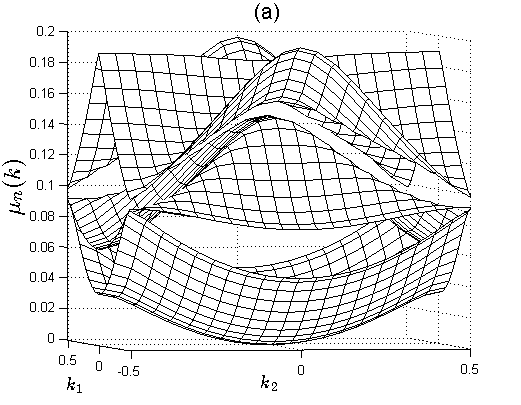}%
    \scalebox{0.6}{\includegraphics{./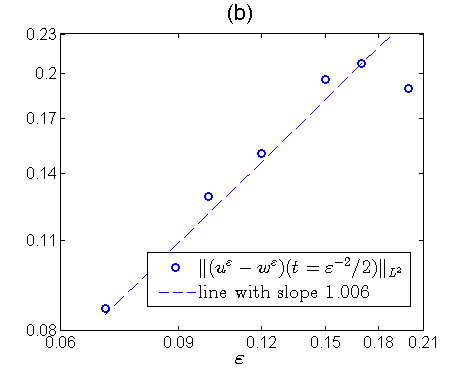}}
    \caption{\label{F:eps_conv}\small (a) The first three eigenvalues
      $\mu_0(k),\mu_1(k),\mu_2(k)$ of \eqref{eq:eigenvalue} for $a_Y$
      from \eqref{E:a_rect}. (b) Experimental convergence in $\eps$ of
      the error $e_\eps := \|(u^\eps-w^\eps)
      (t=\eps^{-2}/2)\|_{L^2(\Omega)}$. A logarithmic scale is used on
      both axes. The line with slope $1.1$ was obtained by a linear
      interpolation of the error in the logarithmic scale for the 5
      smallest $\eps$-values. The experimental rate is thus $e_\eps
      \sim \eps^{1.006}$.}
  \end{center}
\end{figure}

In Fig.~\ref{F:rect_compare} the solutions $u^\eps$ and $w^\eps$ are
plotted for $\eps=0.07$ at $t=100$. In both plots we see clearly the
main pulse located along an ellipse, we see the dispersive
oscillations behind the main pulse, and we see that the dispersion is
weakest along a ray that has an approximate angle $\pi/4$. Due to the
weak dispersion along this ray, the main pulse has its maximal
amplitude in this direction (compare also Fig.~\ref{F:rays_rect}).  An
excellent agreement of the two calculations is observed.
\begin{figure}[h!]
  \begin{center}
    \scalebox{0.49}{\includegraphics{./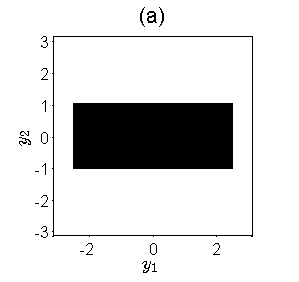}}
    \scalebox{0.48}{\includegraphics{./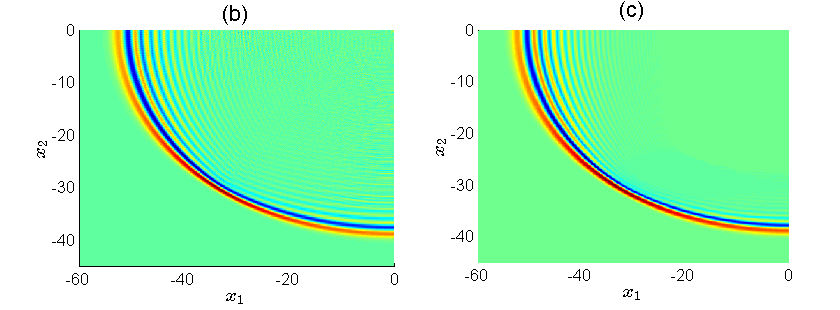}}
    \caption{\label{F:rect_compare} \small \small (a) Illustration of
      the geometry and of the function $a_Y(.)$ from
      \eqref{E:a_rect}. (b) The solution $u^\eps$ of the wave equation
      in a highly oscillatory medium. (c) The solution $w^\eps$ of the
      weakly dispersive wave equation with constant coefficients.  The
      geometry is as in \eqref{E:a_rect}, the plots are for
      $\eps=0.07$ and $t=100$.}
  \end{center}
\end{figure}

\subsubsection*{Further numerical examples}

We consider two more geometries. For a cross-shaped geometry as
illustrated in Fig.~\ref{F:crossB}, (a), we set $a_Y(y) :=
\tilde a_Y(y)I$ with
\begin{equation}\label{E:a_crossB}
  \tilde a_Y(y) = \begin{cases} 2 \quad &\text{for} \ 
    y\in [-\tfrac{7\pi}{9},\tfrac{7\pi}{9}]\times [-\tfrac{2\pi}{9},\tfrac{2\pi}{9}] 
    \cup  [-\tfrac{2\pi}{9},\tfrac{2\pi}{9}] \times [-\tfrac{7\pi}{9},\tfrac{7\pi}{9}], \\
    0.2 \quad &\text{otherwise}.
\end{cases}
\end{equation}
\begin{figure}[h!]
  \begin{center}
    \scalebox{0.52}{\includegraphics{./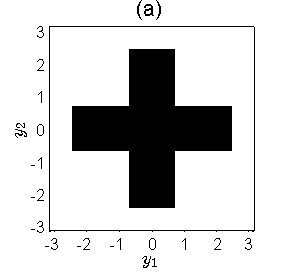}}
    \scalebox{0.52}{\includegraphics{./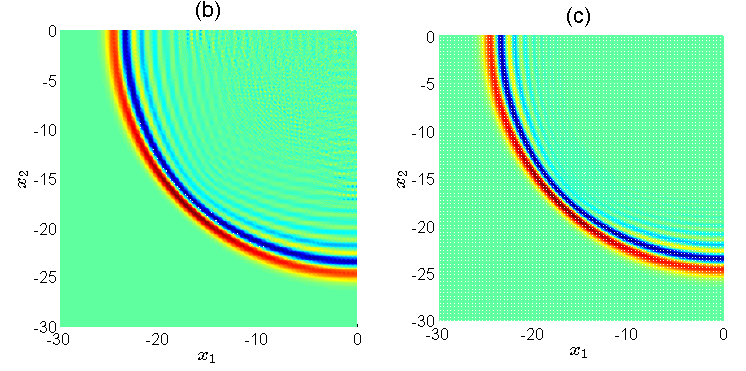}}
    \caption{\label{F:crossB} \small (a) Illustration of the geometry
      that defines the function $a_Y(.)$ in \eqref{E:a_crossB}. (b)
      The solution $u^\eps(x,t=40)$ for $\eps=0.07$. (c) The solution
      $w^\eps(x,t=40)$ for $\eps=0.07$.}
  \end{center}
\end{figure}

The additional symmetry $a_Y(y_1,y_2) = a_Y(y_2,y_1)$ for all $y\in Y$
implies the relations $a_1 = a_2$ and $\alpha_1 = \alpha_2$, see Lemma
2.6 in \cite{Bloch-DLS-2013}.  Algorithm \ref{A:AC} provides, with $Y$
discretized by $2\times 18^2 = 648$ uniform elements of size $h_1 =
h_2 = 2\pi/18$, the values
\begin{equation}\label{E:coef_cross}
  a_1=a_2  \approx  0.3816, \ 
  \alpha_1=\alpha_2 \approx  -0.1970, \ \beta \approx 0.0394.
\end{equation}
To check the accuracy, we calculated the values also using a fine
resolution with $2\times 360\times 360$ uniform elements. The fine
resolution provides $a_1=a_2\approx 0.406$, $\alpha_1=\alpha_2 \approx
-0.235, \beta\approx 0.044$, and the first three decimal places do not
change upon further refinement.  Similarly to the example in
Subsection \ref {ssec.eps-conv}, the discretization error in \eqref
{E:coef_cross} is quite large. Nevertheless, we use these coefficients
for the calculation of the solution $w^\eps$ of
\eqref{eq:weakly-dispersive}. The solution is compared to $u^\eps$,
which is computed with the corresponding spatial discretization $h_1 =
h_2 = 2\pi\eps /18$.  The results at time $t=40$ for $\eps=0.07$ are
plotted in Fig.~\ref{F:crossB}, (b) and (c).

\smallskip We finally consider a laminated structure $a_Y(y) = \tilde
a_Y(y) I$ with
\begin{equation}\label{E:a_lamin}
  \tilde a_Y(y) = \begin{cases} 2 \quad 
    &\text{for} \ y\in [-\pi,\pi]\times [-\tfrac{2\pi}{5},\tfrac{2\pi}{5}], \\
    0.2 \quad &\text{otherwise},
\end{cases}
\end{equation}
compare Fig.~\ref{F:lamin} (a).
\begin{figure}[h!]
  \begin{center}
    \scalebox{0.49}{\includegraphics{./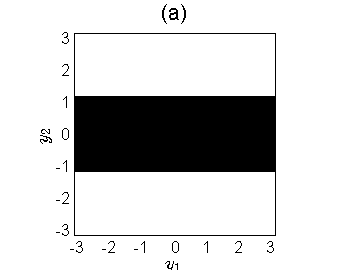}}
    \scalebox{0.48}{\includegraphics{./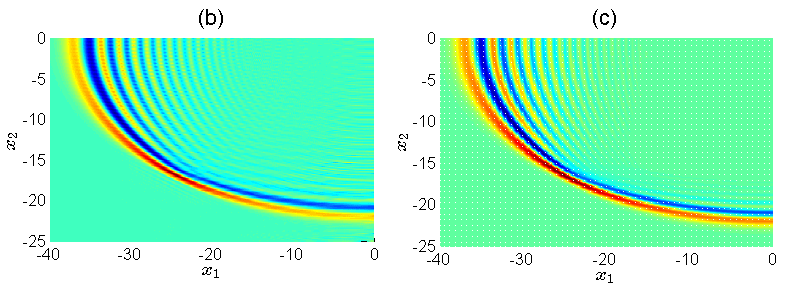}}
    \caption{\label{F:lamin} \small (a) Illustration of the function
      $a_Y(.)$ of \eqref{E:a_lamin}. (b) The solution $u^\eps(x,t=40)$
      for $\eps=0.07$. (c) The solution $w^\eps(x,t=40)$ for
      $\eps=0.07$.}
  \end{center}
\end{figure}
Effective coefficients are obtained by Algorithm \ref{A:AC}, $Y$ is
discretized with $2\times 12\times 16=384$ uniform elements
($h_1=2\pi/12$, $h_2=2\pi/16$):
\begin{equation}\label{E:coef_lamin}
  a_1  \approx  0.8750, \ a_2  \approx  0.3019, \ 
  \alpha_1 \approx -1.9185, \ \alpha_2 \approx -0.0933, \ \beta \approx 0.1448.
\end{equation}
The converged values (with four reliable digits, computed with
$2\times 140\times 180$ uniform elements) are: $a_1 \approx 0.9200,
a_2 \approx 0.3125$, $\alpha_1 \approx -1.9645$, $\alpha_2 \approx
-0.1170$, $\beta \approx 0.1599$.  The effective coefficients
determined using \eqref{E:EF_sym} and \eqref{E:coef_lamin} are
\begin{equation*}
  E_{11} = 2.1925,\ E_{22} = 0.3091, \ F_{1111}=F_{2222}= 0, \  
  F_{2121}= 0.7050, \ F_{1212}= 1.0964.
\end{equation*}
Equation \eqref{eq:eps-wave} was discretized in space using $h_1 =
2\pi\eps/12$ and $h_2 = 2\pi\eps/16$, the solutions $u^\eps$ and
$w^\eps$ are computed using the above coefficients; they are plotted
for $t=40$ and $\eps=0.07$ in Fig.~\ref{F:lamin}, (b) and (c).

\begin{figure}[ht]
  \begin{center}
    \scalebox{0.56}{\includegraphics{./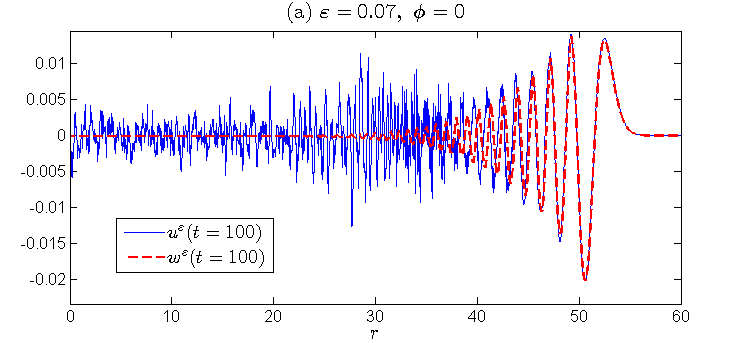}}\\
    \scalebox{0.6}{\includegraphics{./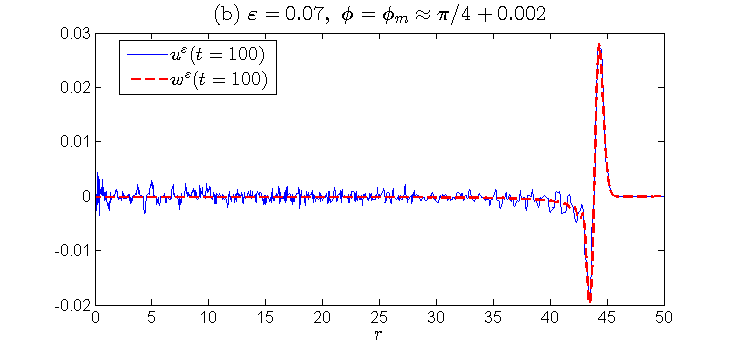}}
    \caption{\label{F:rays_rect} \small Solutions $u^\eps$ and
      $w^\eps$ for $\eps=0.07$ at $t=100$ for the rectangle geometry
      \eqref{E:a_rect} along two rays: (a) $\phi=0$; (b)
      $\phi=\phi_m\approx \pi/4 + 0.002$.}
  \end{center}
\end{figure}

\subsubsection*{Dependence of the dispersion on the propagation angle}

\begin{figure}[ht]
  \begin{center}
    \scalebox{0.37}{\includegraphics{./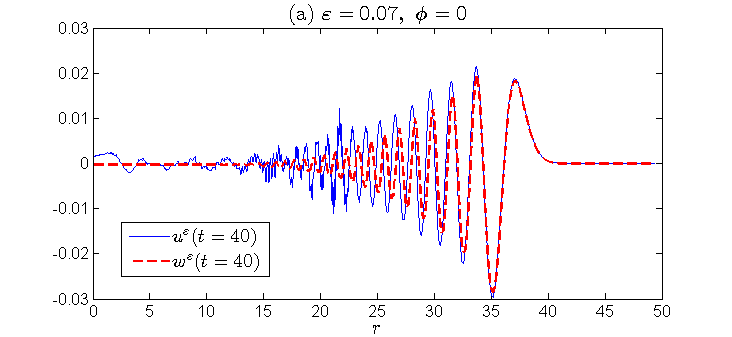}}
    \scalebox{0.37}{\includegraphics{./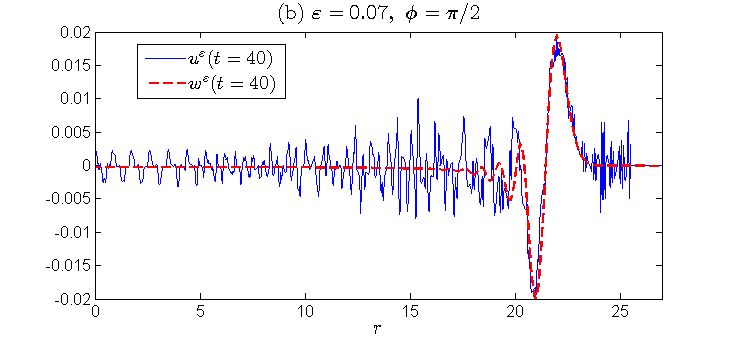}}
    \scalebox{0.39}{\includegraphics{./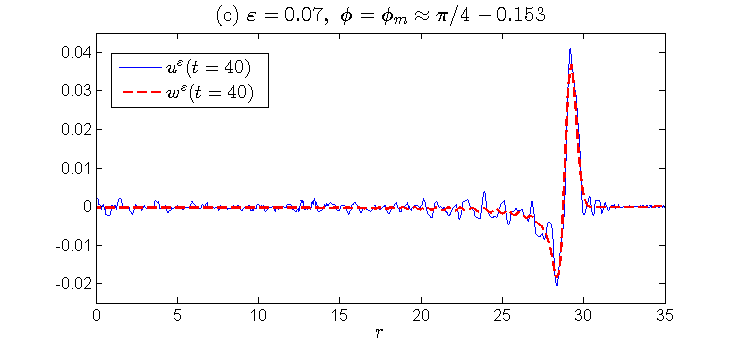}}
    \caption{\label{F:rays_lamin} \small Solutions $u^\eps$ and
      $w^\eps$ for $\eps=0.07$ at $t=40$ for the laminated geometry
      \eqref{E:a_lamin} along three rays: (a) $\phi=0$. (b) $\phi =
      \pi/2$. (c) $\phi = \phi_m \approx \pi/4 - 0.153$.  }
  \end{center}
\end{figure}

The rate of dispersion depends on the angle of propagation.  Here, we
have to distinguish the angle $\fhi$ of the elliptic coordinates and
the corresponding angle $\phi$ in polar coordinates (describing the
observable angle of the ray). We measure the angles such that $\phi =
0$ corresponds to the negative $x_1$-axis. The two angles are related
by $\tan(\phi) = \sqrt{a_2/a_1} \tan(\fhi)$. The dependence of the
dispersion on the angle can be observed in the numerical results: we
see few oscillations in a propagation angle of approximately $\phi =
\pi/4$, more oscillations along rays that are aligned with the
coordinate axes, i.e. at angles $\phi = 0$ and $\phi = \pi/2$. The
angle of minimal dispersion can be obtained by minimizing $\kappa =
\kappa(\fhi)$ of \eqref {eq:kappa-sincos}.

To illustrate the angular dependence, we consider the rectangular
geometry \eqref{E:a_rect} and the two rays corresponding to $\fhi = 0$
and $\fhi = \fhi_m$, the minimizer of $\kappa(.)$.  We plot $u^\eps$
and $w^\eps$ for $\eps=0.07$ and $t=100$ in
Fig.~\ref{F:rays_rect}. One can see clearly a much smaller dispersion
at the angle $\phi_m\approx \pi/4 + 0.002$ corresponding to
$\fhi_m$. Additionally, we observe a much larger error in the aligned
direction $\fhi = 0$. The values of the dispersion coefficient are
$\kappa(0) \approx -4.762$ and $\kappa(\fhi_m) \approx -0.175$.

For the laminate structure \eqref{E:a_lamin} we compare the solutions
along three directions in Fig.~\ref{F:rays_lamin}, namely the
horizontal direction $\phi = 0$ along which the structure is constant,
the vertical direction $\phi = \pi/2$, orthogonal to the laminates,
and an intermediate direction $\phi_m$ that corresponds to the
minimizer $\fhi_m \approx \pi/4 - 0.153$ of $\kappa$. The values of
the dispersion coefficient are $\kappa(0) \approx -2.506$,
$\kappa(\pi/2) \approx -1.024$, and $\kappa(\fhi_m) \approx 0.02$. The
analytical values of $\kappa$ are always non-positive, hence the
positive value $\kappa(\fhi_m)$ is caused by discretization errors.

\bibliographystyle{abbrv}
\bibliography{lit_bloch}

\def\ocirc#1{\ifmmode\setbox0=\hbox{$#1$}\dimen0=\ht0 \advance\dimen0
  by1pt\rlap{\hbox to\wd0{\hss\raise\dimen0
  \hbox{\hskip.2em$\scriptscriptstyle\circ$}\hss}}#1\else {\accent"17
  #1}\fi}\def\cprime{$'$} \def\cprime{$'$}
\begin{thebibliography}{10}

\bibitem{MR3090137}
A.~Abdulle, M.~J. Grote, and C.~Stohrer.
\newblock F{E} heterogeneous multiscale method for long-time wave propagation.
\newblock {\em C. R. Math. Acad. Sci. Paris}, 351(11-12):495--499, 2013.

\bibitem{Allaire-Dispersive2003}
G.~Allaire.
\newblock Dispersive limits in the homogenization of the wave equation.
\newblock {\em Ann. Fac. Sci. Toulouse Math. (6)}, 12(4):415--431, 2003.

\bibitem{Allaire-Bloch1998}
G.~Allaire, C.~Conca, and M.~Vanninathan.
\newblock The {B}loch transform and applications.
\newblock In {\em Actes du 29\`eme {C}ongr\`es d'{A}nalyse {N}um\'erique:
  {CAN}um'97 ({L}arnas, 1997)}, volume~3 of {\em ESAIM Proc.}, pages 65--84
  (electronic). Soc. Math. Appl. Indust., Paris, 1998.

\bibitem{Allaire-PR-Diffractive2009}
G.~Allaire, M.~Palombaro, and J.~Rauch.
\newblock Diffractive behavior of the wave equation in periodic media: weak
  convergence analysis.
\newblock {\em Ann. Mat. Pura Appl. (4)}, 188(4):561--589, 2009.

\bibitem{Allaire-PR-Diffractive2011}
G.~Allaire, M.~Palombaro, and J.~Rauch.
\newblock Diffractive geometric optics for {B}loch wave packets.
\newblock {\em Arch. Ration. Mech. Anal.}, 202(2):373--426, 2011.

\bibitem{FrancfortMR1172450}
S.~Brahim-Otsmane, G.~A. Francfort, and F.~Murat.
\newblock Correctors for the homogenization of the wave and heat equations.
\newblock {\em J. Math. Pures Appl. (9)}, 71(3):197--231, 1992.

\bibitem{MR1921162}
C.~Castro and E.~Zuazua.
\newblock Concentration and lack of observability of waves in highly
  heterogeneous media.
\newblock {\em Arch. Ration. Mech. Anal.}, 164(1):39--72, 2002.

\bibitem{MR1897707}
C.~Conca, R.~Orive, and M.~Vanninathan.
\newblock Bloch approximation in homogenization and applications.
\newblock {\em SIAM J. Math. Anal.}, 33(5):1166--1198 (electronic), 2002.

\bibitem{MR2124894}
C.~Conca, R.~Orive, and M.~Vanninathan.
\newblock Bloch approximation in homogenization on bounded domains.
\newblock {\em Asymptot. Anal.}, 41(1):71--91, 2005.

\bibitem{MR2219790}
C.~Conca, R.~Orive, and M.~Vanninathan.
\newblock On {B}urnett coefficients in periodic media.
\newblock {\em J. Math. Phys.}, 47(3):032902, 11, 2006.

\bibitem{MR1484944}
C.~Conca and M.~Vanninathan.
\newblock Homogenization of periodic structures via {B}loch decomposition.
\newblock {\em SIAM J. Appl. Math.}, 57(6):1639--1659, 1997.

\bibitem{Bloch-DLS-2013}
T.~Dohnal, A.~Lamacz, and B.~Schweizer.
\newblock Bloch-wave homogenization on large time scales and dispersive
  effective wave equations.
\newblock {\em Multiscale Model. Simul.}, accepted, arXiv:1302.4865.

\bibitem{ChenFishMR2097759}
J.~Fish and W.~Chen.
\newblock Space-time multiscale model for wave propagation in heterogeneous
  media.
\newblock {\em Comput. Methods Appl. Mech. Engrg.}, 193(45-47):4837--4856,
  2004.

\bibitem{ChenFish-Uniformly}
J.~Fish, W.~Chen, and G.~Nagai.
\newblock Uniformly valid multiple spatial-temporal scale modeling for wave
  propagation in heterogeneous media.
\newblock {\em Mechanics of Composite Materials and Structures}, 8:81--99,
  2001.

\bibitem{ChenFishMR1896977}
J.~Fish, W.~Chen, and G.~Nagai.
\newblock Non-local dispersive model for wave propagation in heterogeneous
  media: multi-dimensional case.
\newblock {\em Internat. J. Numer. Methods Engrg.}, 54(3):347--363, 2002.

\bibitem{ChenFishMR1896976}
J.~Fish, W.~Chen, and G.~Nagai.
\newblock Non-local dispersive model for wave propagation in heterogeneous
  media: one-dimensional case.
\newblock {\em Internat. J. Numer. Methods Engrg.}, 54(3):331--346, 2002.

\bibitem{FrancfortMurat1992}
G.~A. Francfort and F.~Murat.
\newblock Oscillations and energy densities in the wave equation.
\newblock {\em Comm. Partial Differential Equations}, 17(11-12):1785--1865,
  1992.

\bibitem{Lamacz-Disp}
A.~Lamacz.
\newblock Dispersive effective models for waves in heterogeneous media.
\newblock {\em Math. Models Methods Appl. Sci.}, 21(9):1871--1899, 2011.

\bibitem{OriveZuazua}
R.~Orive, E.~Zuazua, and A.~F. Pazoto.
\newblock Asymptotic expansion for damped wave equations with periodic
  coefficients.
\newblock {\em Math. Models Methods Appl. Sci.}, 11(7):1285--1310, 2001.

\bibitem{Quarteroni2007}
A.~Quarteroni, R.~Sacco, and F.~Saleri.
\newblock {\em Numerical Mathematics}.
\newblock Texts in Applied Mathematics. Springer, 2007.

\bibitem{SanSym}
F.~Santosa and W.~W. Symes.
\newblock A dispersive effective medium for wave propagation in periodic
  composites.
\newblock {\em SIAM J. Appl. Math.}, 51(4):984--1005, 1991.

\end{thebibliography}

\end{document}